\begin{document}
\baselineskip 17pt
\hfuzz=6pt

\newtheorem{theorem}{Theorem}[section]
\newtheorem{proposition}[theorem]{Proposition}
\newtheorem{cor}[theorem]{Corollary}
\newtheorem{lemma}[theorem]{Lemma}
\newtheorem{definition}[theorem]{Definition}
\newtheorem{example}[theorem]{Example}
\newtheorem{remark}[theorem]{Remark}
\newcommand{\ra}{\rightarrow}
\renewcommand{\theequation}
{\thesection.\arabic{equation}}
\newcommand{\ccc}{{\mathcal C}}
\newcommand{\one}{1\hspace{-4.5pt}1}

\newtheorem*{TheoremA}{Theorem A}

\newtheorem*{TheoremB}{Theorem B}

\newcommand{\vmo}{{\rm VMO}}
\newcommand{\BMO}{{\rm BMO}}
\newcommand{\VMO}{{\rm VMO}}
\newcommand{\La}{\Lambda}

\newcommand\CC{\mathbb{C}}
\newcommand\NN{\mathbb{N}}
\newcommand\ZZ{\mathbb{Z}}

\renewcommand\Re{\operatorname{Re}}
\renewcommand\Im{\operatorname{Im}}

\newcommand{\supp}{{\rm supp}{\hspace{.05cm}}}
 
\newcommand{\mar}[1]{{\marginpar{\sffamily{\scriptsize
        #1}}}}

	\makeatletter
	\newcommand{\rmnum}[1]{\romannumeral #1}
	\newcommand{\Rmnum}[1]{\expandafter\@slowromancap\romannumeral #1@}
	\makeatother

   \newcommand {\Rn}{{\mathbb R}^n}

\newcommand\wrt{\,{\rm d}}

\title[Weak type $(1,1)$ bounds for Riesz transforms]
{Weak type $(1,1)$ bounds for Riesz transforms for elliptic operators in non-divergence form
  }

\author{Liang Song and Huohao Zhang}

\address{Liang Song, School of Mathematics,
	Sun Yat-sen University,
	Guangzhou, 510275,
	P.R.~China}
\email{songl@mail.sysu.edu.cn}

\address{Huohao Zhang, School of Mathematics,
	Sun Yat-sen University,
	Guangzhou, 510275,
	P.R.~China}
\email{zhanghh58@mail2.sysu.edu.cn}

\date{\today}
\subjclass[2010]{42B37, 35J15     }

\keywords{Elliptic operators in non-divergence form, Kato square root problem, Functional Calculus, Heat kernels,  Muckenhoupt weights}

\begin{abstract}
	{
Let $L=-\sum_{i,j=1}^n a_{ij}D_iD_j$ be the elliptic operator in non-divergence form with smooth real  coefficients satisfying uniformly elliptic condition. Let $W$ be the global 
 nonnegative adjoint solution. If $W\in A_2$,   we prove that the Riesz transforms $\nabla L^{-\frac{1}{2}}$ is of weak type $(1,1)$ with respect to the measure $W(x)dx$. This, together with $L^2_W$ boundedness of Riesz transforms  \cite{EHH}, implies that the Riesz transforms are bounded in $L^p_W$  for $1<p<2$. Our results are applicable to the case of real coefficients having sufficiently small BMO norm.
}

\end{abstract}

\maketitle

 %\tableofcontents

\section{Introduction}
\setcounter{equation}{0}

The main purpose of  this paper is to study the weak type $(1,1)$ bounds for  Riesz transforms associated to  
second-order elliptic operator in non-divergence form on $\Rn$.
More precisely, consider a  elliptic operator in non-divergence form  
\begin{equation}\label{e1.2}
     Lu=-\sum_{i,j=1}^n a_{ij} D_i D_ju.
\end{equation}
Assume that $A=(a_{ij}(x))$ is a matrix of real and measurable coefficients. Without loss of generality, we  take $A$ to be symmetric.  We also  assume the following uniformly elliptic condition, i.e., 
there exists a constant $ \lambda>0$ such that 
\begin{equation}\label{elliptic}
	A(x)\,\xi\cdot \xi \geq \lambda |\xi|^2 \ \ \ \text{ and  } \ \ \  |A(x)\,\xi\cdot\zeta| \leq \lambda^{-1}|\xi||\zeta|
\end{equation}
  for all $\xi,\zeta\in\mathbb{R}^n$  and a.e. $x\in \mathbb{R}^n$.  

Next, let us recall the definition of solutions of the adjoint equation (see \cite{E2}). A function $u\in L^1_{loc}(\Rn)$ is said to be a solution of the adjoint equation $L^*u=0$ if  for all $\phi\in C^\infty_c(\Rn)$ there holds
$$
\int_{\Rn} u(x)L\phi(x) \,dx=0,
$$
where $L^{\ast}$ is the adjoint operator of $L$, defined by
\begin{equation}\label{e1.3}
	L^{\ast} u=-\sum_{i,j=1}^n  D_iD_j (a_{ij}u). 
\end{equation}

   Let L and $L^\ast$ be as above. Escauriaza \cite{E2}  showed a very important property as follows.   
  There exists a non-negative solution $W$ of the adjoint equation $L^{\ast} W=0$ in ${\mathbb R}^n$, satisfying $W(B_1)=|B_1|$, 
which we call the {\it global non-negative adjoint solution}. Moreover, $W$ satisfies a reverse H\"older property with exponent $n/(n-1)$ so $W\in A_{\infty}$.  If  the coefficients of $L$ are smooth, or even belong to VMO, then $W$ (with the stated normalization) is unique.

It has been pointed out  in \cite{EHH} that $L$, as a linear unbounded operator on $L^2_W$,  is closed, sectorial and m-accretive. Particularly, the numerical
ranges $\{(Lu,u)\in \mathbb{C}: \|u\|_{L^2_W}=1\}\subseteq S_{\omega}:=\{z\in \mathbb{C}: |\arg z|\leq \omega\}\cup \{0\}$, with $0<\omega<\pi/2$.
Hence $L$ has an m-accretive square root (\cite[Theorem 3.35, p.281]{K}).  Also, $L$ generates a holomorphic semigroup $e^{-zL}$, $0\leq |Arg(z)|<\pi/2-\omega$ (\cite[Theorem 1.24, p.492]{K}). 

What is the exact domain of the square root of $L$? It is an important question. Recently,  Escauriaza, Hidalgo-Palencia and Hofmann  \cite{EHH}    answered  the  square root problem for non-divergence second order elliptic operators  in  the case of real  smooth coefficients having sufficiently small BMO norm (see Theorem \ref{kato} below). Theorem \ref{kato}  implies  the domain of $\sqrt{L}$  coincides the Sobolev space $H^1_W({\mathbb R}^n):=\{ u\in L^2_W({\mathbb R}^n): \ \nabla u\in 
 L^2_W({\mathbb R}^n)\}$. It is well known that the  square root problem  for divergence form complex-elliptic opetators in $\Rn$ was a long-standing open problem, which posed by Kato \cite{Ka} and refined by ${\rm M^c}$Intosh\cite{Mc82,Mc84}. It has been  solved by Auscher, Hofmann, Lacey,  ${\rm M^c}$Intosh and
Tchamitchian in \cite{AHLMcT}.

\begin{theorem}[\cite{EHH}] \label{kato}
	Let $L$ be a second-order elliptic operator in non-divergence form with smooth real coefficients 
satisfying \eqref{elliptic}, 
 	and let $W$ be the associated global non-negative adjoint solution $L^{\ast}W=0$.  If $W \in A_2$, then we have
	\begin{equation}\label{kato-inequality}
\big\|\sqrt{L}f\big\|_{L^2_W} \approx \big\|\nabla f\big\|_{L^2_W}\approx\big\|\sqrt{\tilde{L}}f\big\|_{L^2_W}, 
	\end{equation}
	where the implicit constants depend only on $n, \lambda$ and 
	$[W]_{A_2}$. 
	\end{theorem}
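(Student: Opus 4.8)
The plan is to exploit the adjoint equation to reinterpret the non-divergence operator $L$, on the weighted space $L^2_W$, as a (complex, degenerate) \emph{divergence}-form operator with $A_2$ weight $W$, and then to run the square-function / $Tb$ machinery that solved the Kato problem in \cite{AHLMcT}, transplanted to the space of homogeneous type $(\Rn,|\cdot|,W\,dx)$. For the reduction I would integrate by parts twice in the weighted sesquilinear form: using $\sum_{i,j}D_iD_j(a_{ij}W)=0$, one obtains for smooth compactly supported $u,v$
\begin{align*}
\langle Lu,v\rangle_{L^2_W}
&=-\int_{\Rn}\Big(\sum_{i,j}a_{ij}D_iD_ju\Big)\,\overline v\,W\,dx\\
&=\int_{\Rn}A\nabla u\cdot\overline{\nabla v}\,W\,dx+\int_{\Rn}(b\cdot\nabla u)\,\overline v\,dx,\qquad b_j:=\sum_i D_i(a_{ij}W).
\end{align*}
The adjoint equation says precisely $\operatorname{div} b=0$, so $b$ has a real antisymmetric potential $\mathcal C$ with $b\cdot\nabla u=\operatorname{div}(\mathcal C\nabla u)$, hence $Lu=-\tfrac{1}{W}\operatorname{div}\big((AW-\mathcal C)\nabla u\big)$; the antisymmetric part leaves the real part of the form untouched, and $\Re\langle Lu,u\rangle_{L^2_W}=\int_{\Rn}A\nabla u\cdot\overline{\nabla u}\,W\,dx\approx\|\nabla u\|_{L^2_W}^2$. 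Thus $L$ is the operator associated with a degenerate elliptic divergence-form operator whose weight is $W\in A_2$, and its $L^2_W$-adjoint is $\tilde L=-\tfrac{1}{W}\operatorname{div}\big((AW+\mathcal C)\nabla\cdot\big)$, of the same type — which is why \eqref{kato-inequality} is phrased for $L$ and $\tilde L$ simultaneously.

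Next I would reduce everything to the single inequality $\|\sqrt Lf\|_{L^2_W}\lesssim\|\nabla f\|_{L^2_W}$, established for all operators of the above class (so in particular for $L$ and for $\tilde L$). Granting this, the reverse inequality for $L$ follows by duality, since $(\sqrt L)^{\ast}=\sqrt{\tilde L}$ (adjoints in $L^2_W$):
\begin{align*}
\|\nabla f\|_{L^2_W}^2
&\approx\Re\langle Lf,f\rangle_{L^2_W}=\Re\langle\sqrt Lf,\sqrt{\tilde L}f\rangle_{L^2_W}\\
&\le\|\sqrt Lf\|_{L^2_W}\,\|\sqrt{\tilde L}f\|_{L^2_W}\lesssim\|\sqrt Lf\|_{L^2_W}\,\|\nabla f\|_{L^2_W},
\end{align*}
and symmetrically with $L,\tilde L$ interchanged; this gives all the equivalences in \eqref{kato-inequality}. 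In turn, by the usual functional-calculus reductions (the heat semigroup being analytic on $L^2_W$), $\|\sqrt Lf\|_{L^2_W}\lesssim\|\nabla f\|_{L^2_W}$ is equivalent to the square function estimate $\int_0^\infty\|tL\,e^{-t^2L}f\|_{L^2_W}^2\,\tfrac{dt}{t}\lesssim\|\nabla f\|_{L^2_W}^2$; proving this, together with its natural variants, is the heart of the matter and also delivers the bounded holomorphic functional calculus of $L$ on $L^2_W$.

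For the square function estimate I would follow \cite{AHLMcT} (and Auscher--Tchamitchian) in the weighted setting, as in the work of Cruz-Uribe, Martell and Rios on the degenerate Kato problem. The prerequisites, all valid in the $A_2$-weighted world: analyticity of $e^{-tL}$, $L^2_W$ off-diagonal (Gaffney--Davies) bounds for $e^{-t^2L}$, $t\nabla e^{-t^2L}$ and $tLe^{-t^2L}$, and Caccioppoli and reverse-H\"older estimates for local solutions of $L$ and $\tilde L$. One then splits $tLe^{-t^2L}f$ into a ``cancellation'' part, handled by a weighted Poincar\'e inequality and the off-diagonal bounds, and a ``principal part'' $(tLe^{-t^2L}\one)\,f_{Q(x,t)}$, whose control amounts to the Carleson-measure estimate for $|tLe^{-t^2L}\one|^2\,W(x)\,dx\,dt/t$; the latter is proved by a stopping-time / local $Tb$ argument with test functions adapted to $L$ on dyadic cubes, the identities $L\one=\tilde L\one=0$ supplying the required non-degeneracy. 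Tracking constants through this argument, everything depends only on $n$, $\lambda$ and $[W]_{A_2}$.

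The step I expect to be the real obstacle is the antisymmetric correction $\mathcal C$, equivalently the first-order term $\tfrac{1}{W}b\cdot\nabla=(\operatorname{div} A+A\nabla\log W)\cdot\nabla$: its coefficient is \emph{not} controlled by $[W]_{A_2}$ alone (it involves $\nabla\log W$ and $\nabla A$), so $AW-\mathcal C$ need not obey a pointwise upper bound $|AW-\mathcal C|\lesssim W$, and consequently neither an off-the-shelf degenerate-Kato theorem nor a naive lower-order perturbation lemma applies — even the Gaffney estimates require a dedicated argument. The way through is to use the finer structure of the global non-negative adjoint solution: Escauriaza's \cite{E2} interior estimates and the reverse-H\"older property of $W$ give control of $b$ (hence of $\mathcal C$) in the \emph{averaged}, scale-localized sense that the Gaffney estimates and the $Tb$ argument actually need, rather than pointwise; alternatively, one re-runs the whole square-function argument directly, observing that it uses only the accretivity of the form, the $A_2$ structure of $W$, the off-diagonal decay and $L\one=\tilde L\one=0$, none of which $\mathcal C$ destroys. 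The smoothness hypothesis on the coefficients then enters only qualitatively — uniqueness and local regularity of $W$, legitimacy of the integrations by parts, density of test functions — and not in the quantitative bounds.
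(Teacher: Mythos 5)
Note first that the paper does not prove Theorem~\ref{kato}: it is stated as a direct citation of \cite{EHH} and is used throughout as a black box, so the only meaningful comparison is with the argument in \cite{EHH} itself. Your outline does track that argument's architecture. The reduction is the right one: one integration by parts plus $L^{\ast}W=0$ recasts $L$, viewed on $L^2_W$, as a degenerate divergence-form operator with $A_2$ weight $W$ and a divergence-free drift $b_j=\sum_i D_i(a_{ij}W)$ (equivalently an antisymmetric correction $\mathcal{C}$ to $AW$); the paper's identity \eqref{eq2} is exactly the statement that this drift vanishes in the quadratic form. Your duality step via $(\sqrt{L})^{\ast}=\sqrt{\tilde L}$ in $L^2_W$, reducing \eqref{kato-inequality} to the single estimate $\|\sqrt{L}f\|_{L^2_W}\lesssim\|\nabla f\|_{L^2_W}$ for both $L$ and $\tilde L$, is correct, as is the further reduction to a conical square-function / Carleson-measure estimate to be handled by the $Tb$ machinery of \cite{AHLMcT} transplanted to the weighted setting.

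Where the proposal is genuinely incomplete is precisely where you flag it: the drift. You correctly observe that $\tfrac{1}{W}b=\operatorname{div}A+A\nabla\log W$ is not controlled by $n$, $\lambda$ and $[W]_{A_2}$, so one cannot invoke a degenerate-weight Kato theorem off the shelf, nor treat $\mathcal{C}$ as a bounded perturbation, nor get the Davies--Gaffney bounds for free. But the two proposed ways through are asserted rather than carried out, and the second is stated too optimistically: the claim that the stopping-time $Tb$ argument ``uses only accretivity, the $A_2$ structure, off-diagonal decay and $L\mathbf{1}=\tilde L\mathbf{1}=0$'' understates the role of the divergence structure in the construction and control of the adapted test functions and in the integrations by parts of the Carleson estimate, where the drift produces first-order error terms that must be absorbed with constants depending only on $n$, $\lambda$, $[W]_{A_2}$. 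Absorbing them is the real content of \cite{EHH}, and in your write-up that step is a plan, not a proof.
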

Let us give some remarks. (i)
The weighted Hilbert space $L^2_W(\Rn):=L^2(\Rn, W(x)dx)$ is more natural in many ways than unweighted space $L^2(\Rn)$ in the non-divergence case.  (ii)  It follows from \cite{E2} that  $W\in A_\infty$, and then $W\in A_p$ for some $p$ depending on dimension and ellipticity, but $p$  may be strictly greater than $2$. If, additionally,  the coefficients $\{a_{ij}(x)\}$ have sufficiently small BMO norm, then $W\in A_2$, and \eqref{kato-inequality} does  apply in that setting. 
(iii) The operator $\tilde{L}$ in Theorem \ref{kato} is the normalized adjoint of $L$, which is defined, at least for smooth coefficients, by the formula
$\tilde{L}u:=\frac{1}{W}L^*(uW)$. (iv) In the case $n=1$, the  square root problem for non-divergence second order elliptic operators  had been treated 
in \cite{KMe}. Indeed,   authors in \cite{KMe} treated the more general class of operators of the form $L=-aDbD$, where $D$  denotes the ordinary differentiation operator on $\mathbb R$, and $a,b$
are arbitrary bounded accretive complex-valued functions on $\mathbb R$.

 It follows from Theorem \ref{kato} that the Riesz transforms $\nabla L^{-\frac{1}{2}}$ are bounded on $L^2_W(\Rn)$. A natural question is to study the action of the Riesz transforms  on $L^p_W(\Rn)$ or other endpoint spaces. In this paper, we prove the weak type $(1,1)$ boundedness and  so   $L^p$ ($1<p<2$) boundedness of Riesz transforms $\nabla L^{-\frac{1}{2}}$.  The following theorem is our main result.

  \begin{theorem}\label{th1.1}
 	Let $L$ be a second-order elliptic operator in non-divergence form with smooth real coefficients 
satisfying \eqref{elliptic}.
 	Let $W$ be the associated global non-negative adjoint solution $L^{\ast}W=0$. If $W\in A_2$, then
	the operator  
 	$\nabla L^{-\frac{1}{2}}$ is
 	 of weak type $(1,1)$ with respect to $W(x)dx$ such that 
 	$$
 	W(\{|\nabla L^{-\frac{1}{2}} f(x)|>\alpha\}) 
  \leq \frac{C}{\alpha} \int_{\mathbb{R}^n}|f(x)| W(x) \,dx,
 	$$
    where $C$ depends only on $n,\lambda$ and $[W]_{A_2}$.
 Hence by interpolation,  the operator  
 $\nabla L^{-\frac{1}{2}}$   is  bounded on $L^p_W ({\mathbb R}^n)$ for $1<p\leq 2$. 
 \end{theorem}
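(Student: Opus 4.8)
The plan is to prove the weak type $(1,1)$ bound via a Calderón–Zygmund decomposition adapted to the doubling measure $W(x)\,dx$ (recall $W\in A_2\subset A_\infty$, so $W\,dx$ is doubling), combined with the standard splitting of $\nabla L^{-1/2}$ into a ``good'' part controlled in $L^2_W$ and a ``bad'' part that is handled by off-diagonal heat kernel estimates. First I would write the Riesz transform through the subordination formula
$$
\nabla L^{-1/2}f = c\int_0^\infty \nabla e^{-tL}f\,\frac{dt}{\sqrt t},
$$
and fix the level $\alpha$. Given $f\in L^1_W$, perform the Calderón–Zygmund decomposition of $f$ at height $\alpha$ relative to $W\,dx$: this yields $f=g+b$, $b=\sum_i b_i$ with each $b_i$ supported in a cube (or ball) $Q_i$, $\int b_i\,W=0$, $\|b_i\|_{L^1_W}\lesssim \alpha W(Q_i)$, $\sum_i W(Q_i)\lesssim \alpha^{-1}\|f\|_{L^1_W}$, $\|g\|_{L^1_W}\lesssim\|f\|_{L^1_W}$ and $\|g\|_\infty\lesssim\alpha$ (hence $\|g\|_{L^2_W}^2\lesssim\alpha\|f\|_{L^1_W}$). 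The good part is dispatched immediately: by Theorem~\ref{kato}, $\nabla L^{-1/2}$ is bounded on $L^2_W$, so Chebyshev gives $W(\{|\nabla L^{-1/2}g|>\alpha/2\})\lesssim \alpha^{-2}\|g\|_{L^2_W}^2\lesssim \alpha^{-1}\|f\|_{L^1_W}$.

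For the bad part, write $r_i=\ell(Q_i)$ and split $\nabla L^{-1/2}b_i$ using the resolution $\nabla L^{-1/2}=\nabla L^{-1/2}(I-e^{-r_i^2 L})+\nabla L^{-1/2}e^{-r_i^2 L}$. The ``local'' piece: using $\nabla L^{-1/2}(I-e^{-r_i^2L}) = c\int_0^{r_i^2}\nabla e^{-tL}\,\frac{dt}{\sqrt t}$ together with the gradient heat-kernel (or gradient semigroup) bounds $\|\nabla e^{-tL}h\|_{L^2_W}\lesssim t^{-1/2}\|h\|_{L^2_W}$ — which follow from analyticity of the semigroup and the Kato estimate applied to $e^{-tL}=e^{-(t/2)L}e^{-(t/2)L}$ — one integrates over the enlarged cube $2Q_i$ only, and controls the $L^1_W$ mass there by $\alpha W(Q_i)$, so that the sum over $i$ of the contributions on $\bigcup 2Q_i$ is $\lesssim \alpha^{-1}\|f\|_{L^1_W}$ by weak-$L^1$/$L^2$ duality on each cube plus the bound $\sum W(Q_i)\lesssim\alpha^{-1}\|f\|_{L^1_W}$. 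The ``global'' piece $\nabla L^{-1/2}e^{-r_i^2L}b_i$ is estimated pointwise off $2Q_i$: here one exploits the cancellation $\int b_i\,W=0$ against the Hölder regularity in $y$ of the kernel of $\nabla L^{-1/2}e^{-r_i^2L}$ — i.e. Gaussian (or at least sufficiently fast polynomial) off-diagonal decay with a gain of a positive power of $r_i/|x-y|$ — and integrates over $(\bigcup 2Q_i)^c$ using the $A_2$ (hence $A_\infty$ reverse-doubling and doubling) properties of $W$ to sum the geometric series in the annuli around each $Q_i$. Adding up yields $\int_{(\bigcup 2Q_i)^c}|\nabla L^{-1/2}b(x)|\,W(x)\,dx\lesssim \sum_i \|b_i\|_{L^1_W}\lesssim \|f\|_{L^1_W}$, and $W(\bigcup 2Q_i)\lesssim\sum W(Q_i)\lesssim\alpha^{-1}\|f\|_{L^1_W}$ by doubling. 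Combining the estimates for $g$ and $b$ gives the claimed weak $(1,1)$ inequality; the $L^p_W$ bound for $1<p<2$ then follows by Marcinkiewicz interpolation between weak $(1,1)$ and the $L^2_W$ bound of Theorem~\ref{kato}.

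The main obstacle I expect is establishing the right kernel estimates for $\nabla e^{-tL}$ in the weighted setting: unlike divergence-form operators, here $L$ is non-divergence and one does not have an elliptic theory giving Gaussian bounds for $\nabla_x p_t(x,y)$ directly. The key is that the semigroup $e^{-tL}$ and its ``normalized adjoint'' $e^{-t\tilde L}$ both have heat kernels with Gaussian upper bounds with respect to $W\,dx$ (this is implicit in \cite{E2,EHH} via the Escauriaza theory and the $A_2$ condition), and the gradient bound $\|\sqrt t\,\nabla e^{-tL}\|_{L^2_W\to L^2_W}\lesssim 1$ comes from Theorem~\ref{kato} applied after a semigroup splitting. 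Promoting the $L^2_W$ gradient bound plus the scalar Gaussian bound into weighted off-diagonal ($L^1_W\to L^1_W$ on annuli, with decay) estimates for $\nabla e^{-tL}$, and the Hölder-in-$y$ regularity needed for the cancellation argument, is the technical heart of the argument; once those are in hand, the Calderón–Zygmund machinery runs in the standard way, with all constants depending only on $n$, $\lambda$, and $[W]_{A_2}$ as claimed.
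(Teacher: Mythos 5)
Your outer frame (Calder\'on--Zygmund decomposition relative to $W\,dx$, the $L^2_W$ bound for the good part via Theorem \ref{kato}, and the split $T = Te^{-r_j^2L} + T(I - e^{-r_j^2L})$ on each $b_j$) matches the paper, but the way you handle both halves of the bad part diverges from the paper's argument and each has a real gap. For the piece $T(I - e^{-r_j^2L})b_j$: the formula $\nabla L^{-1/2}(I - e^{-r^2L}) = c\int_0^{r^2}\nabla e^{-tL}\,dt/\sqrt t$ is not correct. Subordination gives $L^{-1/2}(I - e^{-tL}) = \int_0^\infty g_t(s)\,e^{-sL}\,ds$ with $g_t(s) = s^{-1/2} - (s-t)^{-1/2}\chi_{\{s>t\}}$, which does \emph{not} truncate to $(0,t)$. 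Moreover, the CZ decomposition gives no control of $\|b_j\|_{L^2_W}$, so the semigroup gradient bound $\|\nabla e^{-sL}h\|_{L^2_W}\lesssim s^{-1/2}\|h\|_{L^2_W}$ cannot close the estimate on $2Q_j$. The paper instead proves a weighted H\"ormander-type $L^1$ bound on the kernel $K_t$ of $T(I - e^{-tL})$, namely $\int_{|x-y|\geq\sqrt t}|K_t(x,y)|\,W(x)\,dx\leq CW(y)$ (the display \eqref{Key}), obtained by integrating Proposition \ref{prop3.1} against $g_t$, and then uses only the size bound $\|b_j\|_{L^1_W}\lesssim\alpha W(B_j)$ on $(3B_j)^c$ --- no cancellation at all.

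For the piece $Te^{-r_j^2L}b_j$, the paper again uses no cancellation of $b_j$ and no H\"older-in-$y$ regularity of the kernel. It shows directly from the Gaussian upper bound \eqref{Gaussian_upper_bound}, doubling of $W$, and the $L^2_W$-boundedness of the weighted maximal function $M_W$ that $\bigl\|\sum_j e^{-r_j^2L}b_j\bigr\|_{L^2_W}^2 \lesssim \alpha\|f\|_{L^1_W}$, and then applies the $L^2_W$-bound of $T$. Your proposed route via $\int b_j W = 0$ and H\"older regularity in $y$ of the kernel of $\nabla L^{-1/2}e^{-r_j^2L}$ would require a mixed second-order regularity estimate $\nabla_x\nabla_y\Gamma_t(x,y)$ that the paper never establishes and that is not available for non-divergence $L$ from the Escauriaza bounds alone; this is precisely what the Coulhon--Duong/Duong--M\textsuperscript{c}Intosh scheme is designed to avoid. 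Finally, you correctly identify the missing technical heart --- a weighted $L^1$ off-diagonal decay for $\nabla_x\Gamma_{s^2}(\cdot,y)$ --- but leave it unproved; in the paper this is Proposition \ref{est1}, obtained through a weighted Caccioppoli inequality with exponential weight, crucially using the identity \eqref{eq2} $\int\sum a_{ij}D_ifD_jf\,W = \int fLf\,W$ for $f\in\mathcal D(L)$, rather than by any semigroup-splitting argument.
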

 
 The proof of Theorem \ref{th1.1} is based on the approach, that used by Coulhon and  Duong \cite{CD} as treated Riesz transforms associated to Laplace-Beltrami operator on a complete Riemannian manifold satisfying the doubling
volume property. Related ideas also appeared earlier in  Duong and ${\rm M^c}$Intosh \cite{DMc}.   The key step is to  prove   one appropriate  estimate for the $L^2_W$ norm of $|\nabla_x \Gamma_{s^2}(x,y)|e^{2\gamma\frac{|x-y|^2}{s^2}}$, where $\Gamma_{s^2}$ is the kernel of heat semigroup $e^{-s^2L}$ (see Proposition \ref{est1} below). To get it, we will use the Gaussian bounds for kernels of the operators $e^{-t^2L}$ and $t^2Le^{-t^2L}$ (Lemmas \ref{heat kernel1} and  \ref{The absolute}), and  a key equality (\cite[(1.4)]{EHH}):
\begin{equation}\label{eq2}
		\int_{\Rn} \sum_{i,j=1}^n a_{ij} D_i f D_j f  W\,dx = \int_{\Rn} f L f W\,dx,  \quad \mbox{if} \ f\in {\mathcal D}(L).
  \end{equation}
 Here ${\mathcal D}(L):=\left\{f\in L^2_W(\mathbb{R}^n): \ Lf \in  L^2_W(\mathbb{R}^n)\right\}.
$  With Proposition \ref{est1} at hand, we then follow the argument of \cite{DMc,CD} to decompose the Riesz transforms $\nabla L^{-1/2}$ into two parts:
$\nabla L^{-1/2}e^{-r_j^2L}$ and $\nabla L^{-1/2}(I-e^{-r_j^2L})$ when we deal with the action of  $\nabla L^{-1/2}$ on   $b_j$, where $\{b_j\}$ is the bad part of the Calder\'on--Zygmund decomposition. Proposition \ref{est1} will play a crucial role in the estimate of $\nabla L^{-1/2}(I-e^{-r_j^2L})$ (see Section 4 below).

We also study the action of the Riesz transforms  on  Hardy space associated with  $L$. It is known that
  $L$ has a bounded $H_\infty$-calculus on $L^2_W(\Rn)$(\cite{EHH,DY02}). The  semigroup
$\{e^{-tL}\}_{t>0}$ satisfies the Davies--Gaffney condition (\cite{EHH}). With these properties at hand, the theory of Hardy spaces associated with $L$, $\mathcal{H}_{L,W}^1(\Rn)$, has been contained in \cite{DLi}, where includes  the molecular decomposition,  the square function characterization,
duality, and interpolation. 
We mention that the theory of Hardy space associated to operators has attracted a lot of attention in
the last two decades and has been a very active research topic in harmonic analysis.  We refer the reader to \cite{ADM,DY05,AMR,HM,HLMMY,DLi,SY} and the references therein.

By combining the properties of $L$ with the theory of Hardy space \cite{DLi}, we get the following result.

\begin{theorem}\label{th1.2}        Let $L, W$ be as Theorem \ref{th1.1}. Then there exists a constant $C>0$, such that  for any $f\in \mathcal{H}_{L,W}^1(\Rn)$,
\begin{align*}
\left\|\nabla L^{-\frac{1}{2}}f\right\|_{L^1_W(\Rn)}\leq C\left\|f\right\|_{\mathcal{H}_{L,W}^1(\Rn)}.
\end{align*}
\end{theorem}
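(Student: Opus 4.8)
The plan is to reduce Theorem \ref{th1.2} to the theory of Hardy spaces $\mathcal{H}^1_{L,W}(\Rn)$ developed in \cite{DLi}, exploiting the fact that the Riesz transform $\nabla L^{-1/2}$ is already controlled on $L^2_W$ by Theorem \ref{kato}. Concretely, I would use the molecular decomposition: every $f\in\mathcal{H}^1_{L,W}(\Rn)$ can be written as $f=\sum_k\lambda_k m_k$ with $\sum_k|\lambda_k|\lesssim\|f\|_{\mathcal{H}^1_{L,W}}$, where each $m_k$ is an $(1,M,\varepsilon)$-molecule adapted to $L$ (supported in the sense of decay around a ball $B_k=B(x_k,r_k)$ and of the form $m_k=L^M b_k$ with appropriate $L^2_W$ size controls on $L^{-j}m_k$). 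By this decomposition and the triangle inequality it suffices to prove a uniform bound $\|\nabla L^{-1/2} m\|_{L^1_W(\Rn)}\leq C$ for every such molecule $m$ adapted to a ball $B=B(x_B,r_B)$.

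To estimate $\|\nabla L^{-1/2} m\|_{L^1_W}$ for a single molecule, I would split $\Rn$ into the dyadic annuli $S_0=4B$ and $S_\ell=2^{\ell+1}B\setminus 2^\ell B$ for $\ell\geq 1$, and bound $\int_{S_\ell}|\nabla L^{-1/2}m|\,W\,dx$ by the Cauchy--Schwarz inequality (with respect to the measure $W\,dx$) as $W(S_\ell)^{1/2}\,\|\nabla L^{-1/2}m\|_{L^2_W(S_\ell)}$. For the local term $S_0$ one simply uses the $L^2_W$-boundedness of $\nabla L^{-1/2}$ (Theorem \ref{kato}) together with the doubling property of $W$ (which holds since $W\in A_2\subset A_\infty$) and the normalization of the molecule. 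For the far annuli $S_\ell$, $\ell\geq 1$, I would use the usual functional-calculus trick: write $L^{-1/2}=c\int_0^\infty e^{-tL}\,\frac{dt}{\sqrt t}$, split the integral at $t=r_B^2$, and estimate the off-diagonal decay of $\nabla e^{-tL}$ acting on $m$ (equivalently on $L^Mb$) by combining the Gaussian/Davies--Gaffney bounds for $t^2Le^{-t^2L}$ (Lemmas \ref{heat kernel1} and \ref{The absolute}) with the $L^2_W$-bound for $\nabla e^{-tL}$, which itself follows from \eqref{eq2} exactly as in Proposition \ref{est1}. The key point is that the molecular cancellation $m=L^Mb$ supplies factors of $(r_B^2/t)^M$ for small $t$, while the semigroup's off-diagonal decay supplies, for large $t$ and on $S_\ell$, a factor like $(r_B/2^\ell r_B)^{N}=2^{-\ell N}$ with $N$ as large as we like; together with the doubling bound $W(2^{\ell+1}B)\lesssim 2^{\ell D}W(B)$ these sum to a finite constant over $\ell$.

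A clean alternative I would keep in mind, and probably the shortest route, is to invoke the abstract machinery directly: since $L$ has a bounded $H_\infty$-calculus on $L^2_W$ and the semigroup satisfies Davies--Gaffney, the results of \cite{DLi} give an $\mathcal{H}^1_{L,W}$--$L^1_W$ boundedness criterion for an operator $T$ in terms of uniform $L^2_W$ off-diagonal (Davies--Gaffney-type) estimates for the family $\{T e^{-tL}\}_{t>0}$ or for $T$ composed with resolvents. Thus the whole theorem reduces to checking that $T=\nabla L^{-1/2}$ satisfies, for all $\ell\geq 1$ and all balls $B$ of radius $\sqrt t$, an estimate of the form $\|\nabla L^{-1/2}(I-e^{-tL})^M u\|_{L^2_W(S_\ell(B))}\lesssim 2^{-\ell(n+1)}\,|B|_W^{-1/2}\|u\|_{L^2_W}$ for $u$ supported in $B$ — and this is proved by the same ingredients: \eqref{eq2}, Theorem \ref{kato}, and Lemmas \ref{heat kernel1}, \ref{The absolute}. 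Either way, the argument is structurally a ``molecules $\to$ $L^2_W$ off-diagonal decay'' reduction.

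The main obstacle, and the step that needs genuine care rather than routine bookkeeping, is controlling the \emph{gradient} of the semigroup in the weighted off-diagonal sense — that is, getting the decay $\|\nabla e^{-tL}u\|_{L^2_W(S_\ell)}\lesssim t^{-1/2}2^{-c\,4^\ell r_B^2/t}\|u\|_{L^2_W(B)}$ (or enough of it) uniformly in the geometry. Unlike the bare semigroup, $\nabla e^{-tL}$ is not manifestly bounded by a convolution with a Gaussian in the weighted setting; one must route through \eqref{eq2} to turn an $L^2_W$ bound on $\nabla e^{-tL}u$ into an $L^2_W$ bound on $(L e^{-tL}u, e^{-tL}u)$, and then feed in the Gaussian bounds of Lemmas \ref{heat kernel1} and \ref{The absolute} for $L e^{-tL}$ together with a Caccioppoli-type localization to recover spatial decay — this is exactly the content of Proposition \ref{est1}, so in practice the theorem hinges on applying that proposition with the molecular cancellation in hand and then summing the resulting geometric series against the $A_2$ (doubling) growth of $W(2^\ell B)$.
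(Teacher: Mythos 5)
Your ``clean alternative'' is exactly the paper's proof: cite \cite[Theorem 4.9]{DLi}, which reduces $\mathcal{H}^1_{L,W}\to L^1_W$ boundedness of $T$ to (a) $L^2_W$ boundedness of $T=\nabla L^{-1/2}$, already secured by Theorem \ref{kato}, and (b) Davies--Gaffney estimates for the family $\sqrt{t}\,\nabla e^{-tL}$. Where you work harder than the paper is in your ``main obstacle'' paragraph: you propose to manufacture the weighted $L^2$ off-diagonal decay of $\nabla e^{-tL}$ by re-running the \eqref{eq2}/Caccioppoli argument behind Proposition \ref{est1}, but that step is already available off the shelf — \cite[Lemma 2.14]{EHH} proves the Davies--Gaffney estimate for $\sqrt{t}\,\nabla e^{-tL}$ directly, and the paper simply cites it. Proposition \ref{est1} is a stronger, pointwise Gaussian kernel bound that is needed for the Calder\'on--Zygmund argument proving weak type $(1,1)$ in Theorem \ref{th1.1}; it is not required for Theorem \ref{th1.2}. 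Your first, hands-on molecular route is also correct — it is essentially what the proof of \cite[Theorem 4.9]{DLi} does internally — so structurally your argument and the paper's coincide; the only genuine simplification you are missing is the direct citation of the gradient Davies--Gaffney lemma from \cite{EHH} in place of rederiving it.
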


The paper is organized as follows. In Section 2, we give some basic results including the properties of Muckenhoupt weights and equations in non-divergence form. In Section 3, we give a  proof of Proposition \ref{est1}, which plays a key role in the proof of Theorem \ref{th1.1}.
With Proposition \ref{est1} at hand, we follow the approach of \cite{CD,DMc} to  prove our main result, Theorem \ref{th1.1},  in Section 4. In Section 5, we recall the definition of $\mathcal{H}_{L,W}^1(\Rn)$, and give 
the proof of Theorem~\ref{th1.2}.

Throughout this paper, we shall  denote $B(x,r)$ or $B_r(x)$ the ball with the center  $x$ and the radius $r$, and $w(E):=\int_E w(x)\,dx$ for any set $E\subset \Rn$ and any weight $w$. The letter “c” and “C” will denote (possibly different) constants that are independent of the essential variables.

\bigskip

 %$$$$$$$$$$$$$$$$$$$$$$$$$$$$$$$$$$$$$$$$$$$$$$$$$$$$$$$$$$$$$$$$$$$$$$

\section{Notation and preliminaries}
\setcounter{equation}{0}

\medskip
%\subsection{Muckenhoupt weights.}
First, we recall some basic definitions and properties of Muckenhoupt weights. For  details we refer to \cite{Gr,Stein}. 
 \begin{definition}
	Let $1<p<\infty$. A weight $w$ is said to be of class Muckenhoupt $A_p$ if $w(x)>0$, a.e. $x\in {\mathbb R}^n$ and  
	\begin{equation*}
[w]_{A_p}:=\sup\limits_B\left(\fint_B w(x)\,dx\right)\left(\fint_{B} w^{-\frac{1}{p-1}} \,dx\right)^{p-1}<\infty,
	\end{equation*}
	where the supremum is taken over  all the balls $B\subset\mathbb{R}^n$. Also, define 
	$A_\infty:=\cup_{1<p<\infty} A_p$.
\end{definition}
The following facts are well known. 

(i) $A_1(\Rn)\subseteq A_p(\Rn)\subseteq A_q(\Rn)$ for $1\leq p\leq q\leq \infty$.

(ii) if $w\in A_p(\Rn)$, $1<p<\infty$, then there exists $1<q<p$ such that $w\in A_q(\Rn)$.

(iii) $A_p$ weights are doubling. Precisely, for all $a>1$ and all balls $B\subset \Rn$, there holds
\begin{align}\label{doubling property}
 w(a B)\leq a^{np}[w]_{A_p}w(B).   
\end{align}

(iv) $A_\infty$ property is equivalent to the Reverse H\"older property, i.e.,   there is an exponent $r>1$ and a uniform constant $C_r>0$ such that for each ball $B$ in $\mathbb{R}^n$,
	\[
	\left(\fint_B w^r \,dx\right)^{\frac{1}{r}}\leq C_r \fint_B w \,dx.
	\leqno(RH_r)
   \]

\medskip

\bigskip

%\subsection{Properties of equations in non-divergence form.}

Next, we state  the Gaussian  bounds for kernels of the operators $e^{-t^2L}$ and $t^2Le^{-t^2 L}$,  which plays an important role in our proof.

\begin{lemma}\label{heat kernel1}{\rm \cite[Theorem 1.2]{E2}}\	The kernel of $e^{-t^2 L}$, $\Gamma_{t^2}(x, y)$,  satisfies the Gaussian bounds,		\begin{eqnarray}\label{Gaussian_upper_bound}
	\Gamma_{t^2}(x, y) \leq C \min \left\{\frac{1}{W\left(B_t(x)\right)}, \frac{1}{W\left(B_t(y)\right)}\right\} e^{-c \frac{|x-y|^2}{t^2}} W(y) \text {, }
	\end{eqnarray}
	and
	\begin{eqnarray}\label{Gaussian_lower_bound}
		\frac{1}{C}\max \left\{\frac{1}{W\left(B_t(x)\right)}, \frac{1}{W\left(B_t(y)\right)}\right\} e^{-\frac{|x-y|^2}{c t^2}} W(y) \leq  \Gamma_{t^2}(x, y),
	\end{eqnarray}
	where constants $c, C$ depend on $n$ and $\lambda$.
\end{lemma}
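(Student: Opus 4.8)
\emph{Proof idea.} This is the Gaussian bound of \cite[Theorem~1.2]{E2}; the plan is to reconstruct it through the standard heat‑kernel machinery, adapted to the weighted measure. It is cleanest to pass to $\widetilde\Gamma_{t^2}(x,y):=\Gamma_{t^2}(x,y)/W(y)$, the Schwartz kernel of $e^{-t^2L}$ viewed as an operator on $L^2_W(\Rn)$, and to prove for it the scale‑invariant two–sided bound
\begin{equation*}
\frac{1}{c\,W(B_t(x))}\,e^{-\frac{|x-y|^2}{ct^2}}\;\le\;\widetilde\Gamma_{t^2}(x,y)\;\le\;\frac{C}{W(B_t(x))}\,e^{-c\frac{|x-y|^2}{t^2}},
\end{equation*}
together with its mirror image in $y$. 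Since $W\in A_2\subset A_\infty$, the measure $W(x)\,dx$ is doubling with constant controlled by $[W]_{A_2}$, so $W(B_t(x))\approx W(B_t(y))$ when $|x-y|\le t$ and the discrepancy is otherwise absorbed into the exponential; multiplying through by $W(y)$ then yields \eqref{Gaussian_upper_bound}–\eqref{Gaussian_lower_bound}.

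For the upper bound the engine is the energy identity \eqref{eq2}: combined with the ellipticity \eqref{elliptic} it shows that the form $\mathfrak a(u,u):=\Re\langle Lu,u\rangle_{L^2_W}$ is comparable to the degenerate Dirichlet energy, $\lambda\int_{\Rn}|\nabla u|^2W\,dx\le\mathfrak a(u,u)\le\lambda^{-1}\int_{\Rn}|\nabla u|^2W\,dx$ on $\mathcal D(L)$. Because $W$ is an $A_2$ weight, this energy obeys the Fabes–Kenig–Serapioni family of weighted Sobolev and Poincaré inequalities on balls, with constants depending only on $n,\lambda,[W]_{A_2}$; in particular one gets a weighted Nash inequality, and feeding it into the Nash–Moser iteration in the doubling metric‑measure‑space form (Grigor'yan/Saloff‑Coste) produces the on–diagonal bound $\widetilde\Gamma_{t^2}(x,x)\le C/W(B_t(x))$, hence $\widetilde\Gamma_{t^2}(x,y)\le C\,[W(B_t(x))W(B_t(y))]^{-1/2}$ by Cauchy–Schwarz and the semigroup law. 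To insert Gaussian decay I would run Davies' exponential‑perturbation argument: for Lipschitz $\psi$ with $\|\nabla\psi\|_\infty\le1$ and $\rho>0$, differentiating $\|e^{\rho\psi}e^{-t^2L}(e^{-\rho\psi}f)\|_{L^2_W}^2$ and controlling the commutator $[e^{\rho\psi},L]$ by the form bound gives $\|e^{\rho\psi}e^{-t^2L}e^{-\rho\psi}\|_{L^2_W\to L^2_W}\le e^{C\rho^2t^2}$; the Gaffney–Davies mechanism (test against indicators of $B_t(x)$, $B_t(y)$, optimize $\rho\sim|x-y|/t^2$) then upgrades the on–diagonal estimate to \eqref{Gaussian_upper_bound}. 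The companion bound for the kernel of $t^2Le^{-t^2L}$ follows from analyticity of $e^{-zL}$ and the Cauchy integral formula.

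For the lower bound \eqref{Gaussian_lower_bound} I would start from the conservation property $\int_{\Rn}\widetilde\Gamma_{t^2}(x,y)W(y)\,dy=e^{-t^2L}\one(x)=1$, valid since $L\one=0$; together with the upper bound (to discard the range $|x-y|\gtrsim t$) a standard argument forces the on–diagonal lower bound $\widetilde\Gamma_{t^2}(x,x)\ge c/W(B_t(x))$. A parabolic Harnack inequality for solutions of $\partial_t u+Lu=0$ — available for coefficients satisfying only \eqref{elliptic} from the Krylov–Safonov theory, with constants interacting controllably with $W$ through its reverse Hölder property — propagates this to $\widetilde\Gamma_{t^2}(x,y)\ge c/W(B_t(x))$ for $|x-y|\le t$, and a chaining argument along $\sim|x-y|^2/t^2$ overlapping balls of radius $\sim t$ joining $x$ to $y$, multiplying the near‑diagonal lower bounds, delivers \eqref{Gaussian_lower_bound}. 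The main obstacle throughout is uniformity: every Sobolev, Nash and Harnack constant must be allowed to depend only on $n$, $\lambda$ and $[W]_{A_2}$, since $W$ is merely the $A_2$ weight furnished by Escauriaza's construction and no quantitative regularity of the $a_{ij}$ may be used; reconciling the non‑divergence form of $L$ with the weighted (quasi‑divergence‑form) machinery — which is exactly what the identity \eqref{eq2} makes possible — is the crux of the argument.
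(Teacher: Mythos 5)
The paper does not actually supply a proof of this lemma: it is quoted verbatim from Escauriaza \cite[Theorem 1.2]{E2}, where it is established for bounded measurable (not necessarily smooth) coefficients and \emph{without} any hypothesis that $W\in A_2$ — note that the constants in the statement depend only on $n$ and $\lambda$, not on $[W]_{A_2}$. Your reconstruction, by contrast, is built on the identity \eqref{eq2}, which is from \cite{EHH} (2024), postdates \cite{E2} by two decades, and requires smooth coefficients; and on weighted Sobolev/Nash machinery in the $A_2$ class. At best this would yield a weaker statement with $[W]_{A_2}$-dependence, which happens to suffice for the present paper but signals that your route is not Escauriaza's. His argument is in the Krylov--Safonov / Aleksandrov--Bakelman--Pucci / adjoint-solution tradition, comparing the parabolic Green function directly with the normalized adjoint solution $W$ and exploiting only the automatic $n/(n-1)$ reverse H\"older property of $W$, not weighted divergence-form Dirichlet form theory.

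More importantly, there is a genuine gap in the Davies exponential-perturbation step. The identity \eqref{eq2} is a \emph{quadratic}-form identity; it does not identify $L$ on $L^2_W$ with the self-adjoint operator of the symmetric form $\int A\nabla u\cdot\nabla v\,W\,dx$. Integrating by parts and using $L^*W=0$ to exhibit the vector field $b_j:=\sum_iD_i(a_{ij}W)$ with $\operatorname{div} b=0$, one gets
\begin{equation*}
\int_{\Rn}(Lu)\,v\,W\,dx=\int_{\Rn}A\nabla u\cdot\nabla v\,W\,dx+\int_{\Rn}(\nabla u\cdot b)\,v\,dx,
\end{equation*}
so in the $L^2_W$ framework the operator is a weighted divergence-form operator \emph{plus a first-order drift} $\beta:=b/W$, which is divergence-free with respect to $W\,dx$ but in general unbounded (it contains $\nabla\log W$ and distributional derivatives of $A$). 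When you test against a function of $u$ alone the drift contribution vanishes — which is exactly why \eqref{eq2} holds and why your on-diagonal Moser iteration has a chance — but in Davies' argument, after conjugating by $e^{\rho\psi}$, the drift produces the leftover term $-\rho\int(\nabla\psi\cdot\beta)\,u^2\,W\,dx$, which is \emph{not} a function-of-$u$ test and does not integrate to zero. You would need form-boundedness or an integrability hypothesis on $\beta$ to absorb it into $C\rho^2\|u\|_{L^2_W}^2$, and no such control is available from \eqref{elliptic} and $W\in A_2$ alone; you do not address this. This is precisely where the ``quasi-divergence-form'' reading of a non-divergence operator breaks down, and it is the step your proposal does not close. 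A smaller point: the companion bound for $t^2Le^{-t^2L}$ is not derived here by a Cauchy-integral argument from this lemma; it is imported separately as Lemma~\ref{The absolute} from \cite[Remark 2.6]{EHH}.
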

\begin{lemma}\label{The absolute} {\rm (\cite[Remark 2.6]{EHH})}
The absolute value of the kernel of the operator $t^2Le^{-t^2 L}$  satisfies the upper bound \eqref{Gaussian_upper_bound}.
\end{lemma}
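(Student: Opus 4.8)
The plan is to realize $t^2Le^{-t^2L}$ as a Cauchy integral of the holomorphic semigroup and then insert Gaussian bounds for complex time. It is convenient to pass to the doubling measure $\wrt\mu:=W\wrt x$, which is doubling since $W\in A_2\subset A_\infty$ (cf. \eqref{doubling property}). Writing $p_z(x,y):=\Gamma_z(x,y)/W(y)$ for the kernel of $e^{-zL}$ with respect to $\mu$, Lemma \ref{heat kernel1} reads
\[
|p_{t^2}(x,y)|\le C\min\Big\{\tfrac{1}{\mu(B_t(x))},\tfrac{1}{\mu(B_t(y))}\Big\}e^{-c|x-y|^2/t^2}\qquad(t>0),
\]
and the assertion to prove is precisely the same inequality for the kernel $q_{t^2}(x,y)$ of $t^2Le^{-t^2L}$ with respect to $\mu$. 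Recall from the Introduction that $-L$ generates a semigroup on $L^2_W$ that is holomorphic in $\Sigma^+:=\{z\in\CC\setminus\{0\}:|\arg z|<\pi/2-\omega\}$ for some $\omega\in(0,\pi/2)$.

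The key step is to upgrade Lemma \ref{heat kernel1} to complex time on a small sector: for any fixed $\eta\in(0,\sin(\pi/2-\omega))$ there are $C,c>0$ so that
\[
|p_z(x,y)|\le C\min\Big\{\tfrac{1}{\mu(B_{\sqrt{|z|}}(x))},\tfrac{1}{\mu(B_{\sqrt{|z|}}(y))}\Big\}e^{-c|x-y|^2/|z|}\qquad\text{whenever }|\arg z|\le\arcsin\eta.
\]
I would obtain this by Davies' exponential-perturbation method. Given a smooth $\phi$ with $\|\nabla\phi\|_\infty\le1$ and $\rho\in\mathbb{R}$, applying the form identity \eqref{eq2} to the twisted operator $e^{\rho\phi}Le^{-\rho\phi}$ (which in non-divergence form picks up first- and zeroth-order terms in $\rho\nabla\phi$ and $\rho^2|\nabla\phi|^2$) yields $\Re\langle e^{\rho\phi}Le^{-\rho\phi}u,u\rangle_{L^2_W}\ge-C\rho^2\|u\|_{L^2_W}^2$, hence $\|e^{\rho\phi}e^{-zL}e^{-\rho\phi}\|_{L^2_W\to L^2_W}\le e^{C\rho^2|z|}$ for $z\in\Sigma^+$ — i.e. complex-time $L^2_W$ Davies--Gaffney estimates with Gaussian decay $e^{-c|x-y|^2/|z|}$ between separated sets (equivalently, these follow from the Davies--Gaffney property recorded in \cite{EHH} by the standard analytic-extension argument). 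Combining these with the on-diagonal bound $\|e^{-sL}f\|_{L^\infty(B_{\sqrt s}(x))}\le C\mu(B_{\sqrt s}(x))^{-1/2}\|f\|_{L^2_W}$, a direct consequence of Lemma \ref{heat kernel1}, via the semigroup splitting $e^{-zL}=e^{-(\Re z/2)L}e^{-(z-\Re z/2)L}$, gives by the standard passage from $L^2$ off-diagonal plus on-diagonal estimates to pointwise Gaussian bounds the displayed inequality; on $\{|\arg z|\le\arcsin\eta\}$ one has $\Re z\approx|z|$ and $\mu(B_{\sqrt{\Re z}}(x))\approx\mu(B_{\sqrt{|z|}}(x))$ by doubling, so $|z|$ may be used throughout. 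Note that the minimum survives because the perturbation argument is symmetric in $x$ and $y$.

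With this in hand I would conclude. Since $e^{-zL}$ is holomorphic on $\Sigma^+$ and $\partial_z e^{-zL}=-Le^{-zL}$, Cauchy's formula on the circle $|\zeta-s|=\eta s\subset\Sigma^+$ gives, for $s=t^2>0$,
\[
q_s(x,y)=-s\,\partial_s p_s(x,y)=-\frac{s}{2\pi i}\oint_{|\zeta-s|=\eta s}\frac{p_\zeta(x,y)}{(\zeta-s)^2}\wrt\zeta,\qquad\text{hence}\qquad|q_s(x,y)|\le\frac1\eta\sup_{|\zeta-s|=\eta s}|p_\zeta(x,y)|.
\]
On that circle $|\zeta|\in[(1-\eta)s,(1+\eta)s]$ and $|\arg\zeta|\le\arcsin\eta$, so the complex-time bound of the previous step, together with the doubling property of $\mu$ (to replace $\mu(B_{\sqrt{|\zeta|}}(\cdot))$ by $\mu(B_{\sqrt s}(\cdot))$), yields $|p_\zeta(x,y)|\le C\min\{\mu(B_{\sqrt s}(x))^{-1},\mu(B_{\sqrt s}(y))^{-1}\}e^{-c|x-y|^2/s}$. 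Setting $s=t^2$ gives $|q_{t^2}(x,y)|\le C\min\{\mu(B_t(x))^{-1},\mu(B_t(y))^{-1}\}e^{-c|x-y|^2/t^2}$, which is exactly \eqref{Gaussian_upper_bound} for the kernel of $t^2Le^{-t^2L}$.

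The main obstacle is the middle paragraph: the Cauchy-integral reduction and the final estimate are elementary, but genuinely obtaining \emph{pointwise} Gaussian bounds for complex time from the real-time bounds of Lemma \ref{heat kernel1} is where the work lies — one must verify the form bound for the twisted non-divergence operator $e^{\rho\phi}Le^{-\rho\phi}$ and then interpolate the resulting complex-time $L^2_W$ off-diagonal estimates against the real-time on-diagonal bounds. Everything else is routine given Lemma \ref{heat kernel1}, the analyticity of the semigroup, and \eqref{eq2}.
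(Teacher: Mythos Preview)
The paper does not supply its own proof of this lemma; it simply quotes \cite[Remark~2.6]{EHH}. So there is nothing in the paper to compare your argument against, and the question is only whether your sketch stands on its own.

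Your overall strategy --- extend the real-time Gaussian bound of Lemma~\ref{heat kernel1} to a small sector of complex time, then recover the kernel of $t^2Le^{-t^2L}=-t^2\partial_s\big|_{s=t^2}e^{-sL}$ by Cauchy's formula on a circle $|\zeta-s|=\eta s$ --- is the standard one and is correct. The Cauchy-integral step and the final comparison of $\mu(B_{\sqrt{|\zeta|}}(\cdot))$ with $\mu(B_{\sqrt s}(\cdot))$ via doubling are unobjectionable once the complex-time pointwise bound is available.

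One technical point you gloss over in the Davies-perturbation paragraph: for a \emph{non-divergence} operator the twisted operator picks up the zeroth-order term $\rho\,L\phi=-\rho\sum a_{ij}D_iD_j\phi$, so the form bound you write depends on $\|\nabla^2\phi\|_\infty$, not merely on $\|\nabla\phi\|_\infty$. A bare Lipschitz $\phi$ does not suffice; one must smooth at some scale and track that scale through the optimization in $\rho$. This is not fatal, but it should be said. Your parenthetical alternative --- start from the Davies--Gaffney estimates already recorded in \cite[Lemma~2.14]{EHH}, push them to the sector by the analytic-extension (Phragm\'en--Lindel\"of) argument, and then combine with the $L^2_W\to L^\infty$ on-diagonal bound coming from Lemma~\ref{heat kernel1} via the factorization $e^{-zL}=e^{-(\Re z/2)L}\,e^{-(z-\Re z/2)L}$ --- sidesteps the second-derivative issue and is the cleaner route here. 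Either way the proposal is sound, and your own diagnosis that the complex-time bound is where the real work lies is accurate.
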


We end this section with a useful inequality.

\begin{lemma}
Let $w \in A_p$, for some $1\leq p<\infty$. There exists a  constant $\tilde{C}>0$,  such that for all $y\in \Rn$ and all $s>0$ 
\begin{align}\label{ee3.10}
\frac{1}{w(B_s(y))} \int_{\Rn} e^{-c\frac{|x-y|^2}{s^2}} {w(x)} \, dx   \leq \tilde{C},
\end{align}
where $\tilde{C}$ depends only on $c,n,p,[w]_{A_p}$.
\end{lemma}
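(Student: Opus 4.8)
The plan is to run a dyadic annular decomposition centered at $y$. Write $\Rn = B_s(y) \cup \bigcup_{k\ge 0} C_k$, where $C_k := B_{2^{k+1}s}(y)\setminus B_{2^k s}(y)$. On the central ball $B_s(y)$ the Gaussian factor is at most $1$, so that piece contributes at most $w(B_s(y))$. On $C_k$ one has $|x-y|\ge 2^k s$, hence $e^{-c|x-y|^2/s^2}\le e^{-c4^k}$, which gives
\[
\int_{C_k} e^{-c\frac{|x-y|^2}{s^2}} w(x)\,dx \le e^{-c4^k}\, w\big(B_{2^{k+1}s}(y)\big).
\]

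Next I would control $w(B_{2^{k+1}s}(y))$ in terms of $w(B_s(y))$ using the doubling estimate \eqref{doubling property} for $A_p$ weights, namely $w(B_{2^{k+1}s}(y)) \le (2^{k+1})^{np}[w]_{A_p}\, w(B_s(y))$. Summing over $k\ge 0$ and adding the contribution of the central ball yields
\[
\frac{1}{w(B_s(y))}\int_{\Rn} e^{-c\frac{|x-y|^2}{s^2}} w(x)\,dx \;\le\; 1 + [w]_{A_p}\sum_{k\ge 0} 2^{(k+1)np} e^{-c4^k} \;=:\; \tilde C .
\]
The series converges since $e^{-c4^k}$ decays faster than any geometric sequence in $k$, and the resulting bound is uniform in $y$ and $s$ because the doubling estimate is; it depends only on $c,n,p,[w]_{A_p}$, as claimed.

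I do not anticipate a genuine obstacle: the estimate is a routine consequence of the doubling property, and the only points that need (minimal) attention are the convergence of $\sum_{k\ge0} 2^{(k+1)np}e^{-c4^k}$ and bookkeeping to confirm that $\tilde C$ depends solely on the listed parameters. One could equally well argue using only that $A_\infty$ weights are doubling, but invoking \eqref{doubling property} directly keeps the dependence on $[w]_{A_p}$ explicit, which is what the statement requires.
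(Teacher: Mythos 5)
Your proof is correct and follows essentially the same dyadic annular decomposition the paper uses, with the same Gaussian tail estimate $e^{-c4^k}$ on each annulus and the same appeal to the doubling inequality \eqref{doubling property} to compare $w(B_{2^{k+1}s}(y))$ with $w(B_s(y))$. (Your indexing $k\ge 0$ is in fact cleaner than the paper's $k\ge 1$, which as written omits the annulus $B_{2s}(y)\setminus B_s(y)$.)
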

\begin{proof}
\begin{align*}
     &\frac{1}{{w(B_s(y))}}\int_{\mathbb{R}^n} e^{-c\frac{|x-y|^2}{s^2}} {w(x)} \, dx \\
     &\leq \frac{1}{w(B_s(y))}\left(
		\int_{B_s(y)}  e^{-c\frac{|x-y|^2}{s^2}} w(x)\,dx+ 
	  \sum_{k=1}^\infty \int_{B_{2^{k+1}s}(y)\backslash B_{2^{k}s}(y)}  e^{-c\frac{|x-y|^2}{s^2}} w(x)\,dx
	  \right)\\
      &\leq  1+\sum_{k=1}^\infty e^{-c4^k}\frac{w(B_{2^{k+1}s}(y))}{w(B_s(y))}
    \end{align*}
By using {doubling property}, we have 
      $$
      {w(B_{2^{k+1}s}(y))}{}\leq 2^{(k+1)np}[w]_{A_p} w(B_s(y)). 
      $$
    Thus, 
   $$ 
   \frac{1}{{w(B_s(y))}}\int_{\mathbb{R}^n} e^{-c\frac{|x-y|^2}{s^2}} {w(x)} \, dx\leq 1+\sum_{k=1}^\infty e^{-c4^k}2^{(k+1)np}[w]_{A_p} \leq \tilde{C}(n,p,c,[w]_{A_p}).
 $$

\end{proof}

% 
%\newpage

%$$$$$$$$$$$$$$$$$$$$$$$$$$$$$$$$$$$$$$$$$$$$$$$$$$$$$$$$$$$$$$$$$$$$$$

\bigskip

\section{Some estimates about the gradient of heat kernels}
\setcounter{equation}{0}

The  aim of this section is to prove the following proposition, which plays a crucial role  in the proof of Theorem~\ref{th1.1}.

\begin{proposition}\label{est1}
	There exists a constant $\gamma>0$, depends only on $n$ and $\lambda$,  such that for $a.e.\ y\in \mathbb{R}^n$, 
  $$
  \int_{\mathbb{R}^n}|\nabla_x \Gamma_{s^2}(x,y)|^2 e^{2\gamma\frac{|x-y|^2}{s^2}}W(x)\, dx \leq C \frac{(W(y))^2}{ s^2W(B_s(y))},
  $$
where the  constant $C$ depends only on $n, \lambda$ and $[W]_{A_2}$. 
 \end{proposition}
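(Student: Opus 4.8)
The plan is to exploit the Gaussian bounds for the kernels of $e^{-t^2L}$ and $t^2Le^{-t^2L}$ (Lemmas~\ref{heat kernel1} and \ref{The absolute}) together with the key identity \eqref{eq2}, applied not to $\Gamma_{s^2}(\cdot,y)$ directly but to a suitably regularized and weighted version of it. The first step is to fix an auxiliary exponential weight $\varphi(x)=e^{2\gamma|x-y|^2/s^2}$ (with $\gamma$ small, to be chosen), and to consider the quantity $\int \varphi\, |\nabla_x\Gamma_{s^2}(\cdot,y)|^2 W$. Since the matrix $A$ is uniformly elliptic, by \eqref{elliptic} this is comparable to $\int \varphi\, A\nabla u\cdot\nabla u\, W$ with $u=\Gamma_{s^2}(\cdot,y)$; the idea is to move one factor of $\varphi$ through the divergence and use \eqref{eq2} with $f = \varphi^{1/2}u$ (or work with $\sqrt\varphi\, u$ directly), picking up a ``good term'' $\int \varphi\, u\, Lu\, W$ and an ``error term'' coming from the derivatives falling on $\varphi$, which is controlled by $\gamma$ times the left-hand side and hence absorbed when $\gamma$ is small relative to $\lambda$.

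The second step is to bound the good term. Writing $Lu = L\Gamma_{s^2}(\cdot,y)$ and using that $s^2 Le^{-s^2L}$ has kernel obeying the same Gaussian bound \eqref{Gaussian_upper_bound}, we get
\begin{align*}
\left|\int_{\Rn}\varphi(x)\, u(x)\, Lu(x)\, W(x)\,dx\right|
&\le \frac{C}{s^2}\int_{\Rn}\varphi(x)\,\frac{1}{W(B_s(y))}e^{-c|x-y|^2/s^2}W(y)\\
&\qquad\qquad\times\frac{1}{W(B_s(y))}e^{-c|x-y|^2/s^2}W(y)\,W(x)\,dx,
\end{align*}
so that, choosing $\gamma < c$ so that $\varphi(x)e^{-2c|x-y|^2/s^2}\le e^{-c'|x-y|^2/s^2}$ for some $c'>0$, this is at most
$$
\frac{C\,(W(y))^2}{s^2\,W(B_s(y))^2}\int_{\Rn}e^{-c'|x-y|^2/s^2}W(x)\,dx
\le \frac{C\,(W(y))^2}{s^2\,W(B_s(y))},
$$
where the last inequality is exactly Lemma with \eqref{ee3.10} (using $W\in A_2$). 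This is the desired bound.

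The third step is to make the integration-by-parts argument rigorous. The honest obstacle is that \eqref{eq2} is stated for $f\in\mathcal D(L)$ with no exponential weight, whereas $\varphi^{1/2}\Gamma_{s^2}(\cdot,y)$ need not lie in $L^2_W$ a priori; moreover one must justify differentiating the kernel in $x$ and that $\Gamma_{s^2}(\cdot,y)$ is a legitimate element of $\mathcal D(L)$. I would handle this by a truncation and limiting procedure: replace $\varphi$ by $\varphi_R = \min(\varphi, R)$ (a bounded Lipschitz weight, so $\varphi_R^{1/2}u\in H^1_W\subset\mathcal D(\sqrt L)$ and the relevant quantities are finite by the Gaussian upper bound), run the identity \eqref{eq2} — or rather its bilinear/semigroup form $\int A\nabla u\cdot\nabla(\psi)\,W = \int (Lu)\psi\,W$ — with $\psi=\varphi_R u$, absorb the $|\nabla\varphi_R|\le C(\gamma/s^2)|x-y|\varphi_R$ error into the left side using $|x-y|^2/s^2 \cdot e^{-c|x-y|^2/s^2}\lesssim 1$, and then let $R\to\infty$ by monotone convergence on the left and dominated convergence on the right (the right side being controlled uniformly in $R$ by the Step 2 estimate with $\varphi$ in place of $\varphi_R$). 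The smoothness of the coefficients guarantees $\Gamma_{s^2}(\cdot,y)$ is smooth and all manipulations are classical away from this functional-analytic bookkeeping. I expect Step 3 — the rigorous justification of the weighted integration by parts and the passage to the limit — to be the main technical point; Steps 1–2 are essentially the Gaussian-bound bookkeeping plus one application of \eqref{ee3.10}.
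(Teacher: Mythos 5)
Your proposal follows essentially the same route as the paper: apply the identity \eqref{eq2} to the product $f = v\,\Gamma_{s^2}(\cdot,y)$ where $v$ is a compactly truncated version of the exponential weight, expand $L(vu)$ by the Leibniz rule, invoke ellipticity on the left, bound the resulting good term and error terms via the Gaussian bounds of Lemmas~\ref{heat kernel1}--\ref{The absolute} and inequality \eqref{ee3.10}, take $\gamma$ small, and let the truncation parameter tend to infinity. One caution, though: the ``bilinear form'' you write down in Step 3, namely $\int A\nabla u\cdot\nabla\psi\,W = \int(Lu)\,\psi\,W$, is not valid for this non-self-adjoint $L$. Polarizing the quadratic identity \eqref{eq2} gives only $\int A\nabla u\cdot\nabla\psi\,W = \tfrac12\bigl(\int u\,L\psi\,W + \int\psi\,Lu\,W\bigr)$, and the two integrals on the right agree if and only if $L$ is self-adjoint on $L^2_W$, which it generally is not (its $L^2_W$-adjoint is the normalized adjoint $\tilde L\neq L$). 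The correct execution is exactly your Step~1 plan, and is what the paper does: apply \eqref{eq2} directly to $f = vu$, expand $L(vu)=vLu+uLv-2\sum a_{ij}D_iuD_jv$, and handle the cross term $\int A\nabla u\cdot\nabla v\,uv\,W$ by Cauchy--Schwarz with a free small parameter. This also shows that the smallness constraint on $\gamma$ is dictated by the Gaussian decay exponent $c$ (the paper takes $\gamma=c/3$), not by $\lambda$; the cross term is absorbed by Young's inequality regardless of the size of $\gamma$.
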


\begin{proof}  
  
\medskip

Take $\varphi\in C_c^{\infty}(B(0,1))$ such that  $0\leq \varphi\leq 1$ and $\varphi=1$  on $B(0,1/2)$.
For any $R>0$, denote $\varphi_R(x):=\varphi(\frac{x}{R})$. To prove Proposition  \ref{est1}, it suffices to prove that 
 there exist  constants $\gamma>0$ and $C>0$, depend only on $n$ and $\lambda$,  such that for any $R>1$,
    \begin{equation}\label{IR}
	\int_{\mathbb{R}^n}\left|\nabla_x \Gamma_{s^2}(x,y)\right|^2\left[\varphi_R(x-y)e^{\gamma\frac{|x-y|^2}{s^2}}\right]^2 W(x)\, dx \leq C \ \frac{(W(y))^2}{ W(B_s(y))}
	\left(\frac{1}{s^2}+\frac{1}{R^2}\right),	
    \end{equation}
for $a.e.\ y\in \mathbb{R}^n$. In fact, since the constant $C$  above is independent of $R$, then Proposition \ref{est1} follows from \eqref{IR}  by letting $R\rightarrow\infty$.

Let us prove \eqref{IR}.
 Since $W\in L^1_{loc}(\Rn)$, then $W(y)<\infty$ for  a.e. \,$y\in \Rn$. Fix $y\in \mathbb{R}^n$ such that $W(y)<\infty$. It is easy to see that 
	\begin{align}\label{e3.1}
	\int_{\mathbb{R}^n}|\Gamma_{s^2}(x,y)|^2 W(x)\,dx \leq C \frac{W(y)^2}{W(B_s(y))},
	\end{align}
    and 
 \begin{align}\label{e3.2}
\int_{\mathbb{R}^n}|L\Gamma_{s^2}(\cdot,y)(x)|^2 W(x)\,dx \leq C \frac{W(y)^2}{s^4W(B_s(y))}.
	\end{align}
   In fact, one can apply \eqref{Gaussian_upper_bound}   to obtain
	\begin{align*}
	 &\int_{\mathbb{R}^n}|\Gamma_{s^2}(x,y)|^2 W(x)\,dx \leq C \int_{\mathbb{R}^n}\left|  \frac{1}{W\left(B_s(y)\right)} e^{-c \frac{|x-y|^2}{s^2}} W(y)\right|^2W(x)\,dx \\
     &\leq C\frac{W(y)^2}{W(B_s(y))} \left( \frac{1}{W(B_s(y))}\int_{\Rn} e^{-2c \frac{|x-y|^2}{s^2}} W(x)\,dx  \right),
	 %&\leq C \frac{W(y)^2}{W(B_t(y))},
    \end{align*}
which, together with \eqref{ee3.10}, yields \eqref{e3.1}.   \eqref{e3.2} follows by  a similar argument and   Lemma \ref{The absolute}.

 By using (\ref{elliptic}),  one can write
$$
\int_{\Rn} |\nabla_x \Gamma_{s^2}(x,y)|^2 W(x)\, dx
	\leq \lambda^{-1} \int_{\Rn} A\nabla_x \Gamma_{s^2}(x,y)\cdot \nabla_x \Gamma_{s^2}(x,y)  W(x) \, dx. 
$$
Applying \eqref{eq2}, \eqref{e3.1}, \eqref{e3.2} and the fact $\Gamma_{s^2}(\cdot,y)\in {\mathcal D}(L)$,  we  obtain 
	\begin{align}\label{e3.3}
	 &\int_{\Rn} A\nabla_x \Gamma_{s^2}(x,y)\cdot \nabla_x \Gamma_{s^2}(x,y)  W(x) \, dx
		= \int_{\Rn} L \Gamma_{s^2}(\cdot,y)(x)  \Gamma_{s^2}(x,y)  W(x)\, dx  \nonumber\\
	&\leq \left(\int_{\mathbb{R}^n}|L\Gamma_{s^2}(\cdot,y)(x)|^2 W(x)\, dx\right)^{1/2} \left(\int_{\mathbb{R}^n}|\Gamma_{s^2}(x,y)|^2 W(x)\, dx\right)^{1/2}\\
	&\leq C\frac{W(y)^2}{s^2W(B_s(y))}.  \nonumber  	
    \end{align}

	%Since $L\Gamma_{s^2}(\cdot,y)\in L^2_W, \Gamma_{s^2}(\cdot,y)\in L^2_W$, we have $\Gamma_{s^2}(\cdot,y)\in D(L)=H^2_W$.
	%(See \cite{Escauriaza2024OnTK})
     
Denote $u(x):=\Gamma_{s^2}(x,y)$ and $v(x):=\varphi_R(x-y)e^{\gamma\frac{|x-y|^2}{s^2}}$, where $\gamma$   is a  small positive number to be determined later. Since  $u\in H^2_W$ and $v\in C_c^\infty$, then  $uv\in H^2_W$.  Then we can apply 
(\ref{eq2}) and the identity $L(uv)=vLu+uLv-2\sum_{i,j=1}^na_{ij}D_i uD_j v$
to obtain
  \begin{equation} \label{eq1}
   \begin{aligned}
	&\int_{\mathbb{R}^n} A\nabla_x (uv) \cdot \nabla_x (uv) W(x)\,dx=\int_{\mathbb{R}^n} L (uv) uvW(x)\,dx\\
	=&\int_{\mathbb{R}^n} Lu \ uv^2 W(x)\, dx
	+\int_{\mathbb{R}^n} Lv \ u^2vW(x)\, dx 
	-2\int_{\mathbb{R}^n} \sum_{i,j=1}^na_{ij}D_iuD_jv  uvW(x)\, dx\\
	=&\int_{\mathbb{R}^n} Lu \, uv^2 W(x)\,dx
	+\int_{\mathbb{R}^n} Lv \, u^2vW(x)\,dx 
	-2\int_{\mathbb{R}^n} A\nabla_x u \cdot \nabla_x v uvW(x)\,dx.
   \end{aligned}
  \end{equation}
  Note that  \eqref{elliptic} implies that
   \begin{equation}\label{ineq1}
   \begin{aligned}
   &\int_{\mathbb{R}^n} A\nabla_x (uv) \cdot \nabla_x (uv) W(x)\,dx \\
   &\geq \lambda \int_{\mathbb{R}^n} |\nabla_x (uv)|^2  W(x)\,dx \\
   &= \lambda  \int_{\mathbb{R}^n} |\nabla_x u|^2v^2  W(x)\,dx
   + \lambda\int_{\mathbb{R}^n} |u|^2|\nabla_x v|^2 W(x)\,dx
   +2\lambda\int_{\mathbb{R}^n} (\nabla_x u\cdot \nabla_x v)  uvW(x)\,dx \\
   &\geq \lambda \int_{\mathbb{R}^n} |\nabla_x u|^2 v^2  W(x)\,dx 
   +2\lambda\int_{\mathbb{R}^n} (\nabla_x u\cdot \nabla_x v)  uvW(x)\,dx.
  \end{aligned}
  \end{equation}

\noindent Then, we combine (\ref{eq1}) and (\ref{ineq1}) to obtain
\begin{align}\label{e3.4}
&\int_{\mathbb{R}^n} |\nabla_x u|^2 v^2  W(x)\,dx \nonumber\\
&\leq
  C\left\{\int_{\mathbb{R}^n} |Lu| |u|v^2 W(x)\,dx+\int_{\mathbb{R}^n} |\nabla_x u||\nabla_x v|  |uv|W(x)\,dx
	+\int_{\mathbb{R}^n} |Lv| u^2|v|W(x)\,dx\right\}\\
    &=:\Rmnum{1}+\Rmnum{2}+\Rmnum{3}. \nonumber
\end{align}
Consider the term  \Rmnum{1}.  By Lemmas \ref{heat kernel1} and \ref{The absolute}, we have 
	\begin{align*}
	  \Rmnum{1} &\leq C \frac{1}{s^2} \left(\int_{\Rn} |s^2L \Gamma_{s^2}(\cdot,y)(x)|^2e^{\gamma\frac{|x-y|^2}{s^2}}W(x) \, dx \right)^{\frac{1}{2}}
	  \left(\int_{\Rn} | \Gamma_{s^2}(\cdot,y)(x)|^2e^{\gamma\frac{|x-y|^2}{s^2}}W(x) \, dx \right)^{\frac{1}{2}}\\
      &\leq C\frac{1}{s^2} \int_{\Rn} \frac{1}{W\left(B_s(y)\right)^2}  e^{-(2c-\gamma) \frac{|x-y|^2}{s^2}} {W(y)}^2 W(x) \, dx.
    \end{align*}  
Whenever $\gamma\leq c$, one can use \eqref{ee3.10} to obtain
\begin{align}\label{e3.9}
I\leq C\frac{1}{s^2} \int_{\Rn}   e^{-c \frac{|x-y|^2}{s^2}} W(x) \, dx  \frac{{W(y)}^2}{W\left(B_s(y)\right)^2}\leq C{s^{-2}}\frac{{W(y)}^2}{W\left(B_s(y)\right)}.
\end{align}

\medskip

For the term $II$, we use the Cauchy-Schwartz inequality to obtain 
	\begin{align}\label{e3.7}
	\Rmnum{2}&\leq \frac{1}{2}\int_{\mathbb{R}^n} |\nabla_x u|^2 v^2  W(x)\,dx+2C \int_{\mathbb{R}^n} |\nabla_x v|^2 u^2  W(x)\,dx.
    \end{align}
Observe that $\int_{\mathbb{R}^n} |\nabla_x u|^2 v^2  W(x)\,dx<\infty$ by \eqref{e3.3}. Then the first term of the RHS of (\ref{e3.7}) can be controlled by the LHS of (\ref{e3.4}). So to estimate the term  $II$, it suffices to prove the following
\begin{align}\label{e-II}
\int_{\mathbb{R}^n} |\nabla_x v|^2 u^2  W(x)\,dx\leq C \frac{(W(y))^2}{W(B_s(y))} \left(\frac{1}{R^2}+\frac{1}{s^2}\right).
\end{align}
In fact, one can obtain that
	 \begin{align}\label{e3.5}
	 \left|\nabla_xv\right|^2 &\leq \left(|(\nabla_x\varphi_R)(x-y)| e^{\gamma\frac{|x-y|^2}{s^2}}+\varphi_R(x-y)\frac{2\gamma |x-y|}{s^2}e^{\gamma\frac{|x-y|^2}{s^2}} \right)^2\nonumber\\
	 &\leq 2\left(
	 |(\nabla_x\varphi_R)(x-y)|^2 e^{2\gamma\frac{|x-y|^2}{s^2}}+|\varphi_R(x-y)|^2 \frac{4\gamma^2|x-y|^2}{s^4}e^{2\gamma\frac{|x-y|^2}{s^2}}\right)\\
     &\leq C e^{3\gamma\frac{|x-y|^2}{s^2}}\left(\frac{1}{R^2}+\frac{1}{s^2}\right),\nonumber
	\end{align}
where in the last inequality we have used the fact $s^{-2}{|x-y|^2}e^{2\gamma\frac{|x-y|^2}{s^2}}\leq C e^{3\gamma\frac{|x-y|^2}{s^2}}$ and $|\nabla_x\varphi_R|\leq CR^{-1}$.
 \eqref{e3.5}, together with \eqref{Gaussian_upper_bound} and \eqref{ee3.10} , yields
	\begin{align*}
	\int_{\mathbb{R}^n} |\nabla_x v|^2 u^2  W(x)\,dx &\leq C  \frac{(W(y))^2}{W(B_s(y))^2} \left(\frac{1}{R^2}+\frac{1}{s^2}\right)\int_{\mathbb{R}^n} 
	  {W(x)}e^{(3\gamma-2c)\frac{|x-y|^2}{s^2}}
	  \, dx\\
      &\leq C \frac{(W(y))^2}{W(B_s(y))} \left(\frac{1}{R^2}+\frac{1}{s^2}\right),
       \end{align*}
  whenever $\gamma\leq c/3$.  This completes the proof of \eqref{e-II}.

\medskip

It remains to estimate the term \Rmnum{3}.   We first obtain
	$$
	\begin{aligned}
	  \left|\nabla_x^2v\right| &\leq C \left\{\frac{1}{s^2} \left(4\gamma^2\frac{|x-y|^2}{s^2}+2\gamma\right) e^{\gamma\frac{|x-y|^2}{s^2}}
	  +\frac{2\gamma|x-y|}{s^2}\frac{1}{R} e^{\gamma \frac{|x-y|^2}{s^2}}
	  +\frac{1}{R^2} e^{\gamma\frac{|x-y|^2}{s^2}}\right\}\\
	  &\leq C \left\{\frac{1}{R^2}e^{2\gamma \frac{|x-y|^2}{s^2}} +\frac{1}{s^2}\left(\frac{|x-y|^2}{s^2} +1\right)e^{\gamma \frac{|x-y|^2}{s^2}}\right\},
	\end{aligned}
	$$
where the constant $C$ depends on $\gamma$ and $\varphi$.
	Also, by the assumption $\left\|a_{ij}\right\|_{L^\infty}\leq \lambda^{-1}$, we have 
	$$
	\left|Lv\right|\leq C \left\{\frac{1}{R^2}e^{2\gamma \frac{|x-y|^2}{s^2}} +\frac{1}{s^2}\left(\frac{|x-y|^2}{s^2} +1\right)e^{\gamma \frac{|x-y|^2}{s^2}}\right\}.
	$$
    Then a similar argument to that of \eqref{e-II} gives
    \begin{align}\label{e3.8}
    \Rmnum{3}\leq C \frac{(W(y))^2}{W(B_s(y))} \left(\frac{1}{R^2}+\frac{1}{s^2}\right),
    \end{align}
whenever $\gamma\leq c/3$.  Letting $\gamma=c/3$, we combine \eqref{e3.9},\eqref{e-II} and \eqref{e3.8} to   complete of the proof of  \eqref{IR}.
\end{proof}

\medskip

 The following  proposition is a direct corollary  of  Proposition \ref{est1}.

\begin{proposition}\label{prop3.1}
	There exist constants $\beta, C>0$, depend only on $n$ and $\lambda$, such that the following estimate holds 
	\begin{equation}\label{2.4.1}
		\int_{|x-y|>t} \left|\nabla_x \Gamma_{s^2}(x,y)\right| W(x)\,dx \leq C  s^{-1} e^{-\beta  t^2/s^2 }  W(y),
	\end{equation}
    for all $s>0,t>0$, and  a.e.   $y\in \Rn$.
\end{proposition}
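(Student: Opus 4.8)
The plan is to obtain \eqref{2.4.1} as an immediate off-diagonal consequence of Proposition \ref{est1}, via a single application of the Cauchy--Schwarz inequality in which a Gaussian weight is split between the two factors. Let $\gamma>0$ be the constant produced by Proposition \ref{est1}. First I would write, for a.e. $y$,
$$\int_{|x-y|>t} \left|\nabla_x \Gamma_{s^2}(x,y)\right| W(x)\,dx = \int_{|x-y|>t} \Big(\left|\nabla_x \Gamma_{s^2}(x,y)\right| e^{\gamma\frac{|x-y|^2}{s^2}}\Big)\, e^{-\gamma\frac{|x-y|^2}{s^2}} W(x)\,dx,$$
and apply Cauchy--Schwarz with respect to the measure $W(x)\,dx$.

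The first factor is controlled directly by Proposition \ref{est1}: it is at most $\big(C (W(y))^2 / (s^2 W(B_s(y)))\big)^{1/2}$. For the second factor, I would use that on $\{|x-y|>t\}$ one has $e^{-2\gamma|x-y|^2/s^2}\le e^{-\gamma t^2/s^2}e^{-\gamma|x-y|^2/s^2}$, so that
$$\int_{|x-y|>t} e^{-2\gamma\frac{|x-y|^2}{s^2}} W(x)\,dx \le e^{-\gamma t^2/s^2}\int_{\mathbb{R}^n} e^{-\gamma\frac{|x-y|^2}{s^2}} W(x)\,dx \le \tilde{C}\, e^{-\gamma t^2/s^2}\, W(B_s(y)),$$
the last inequality being \eqref{ee3.10} with $c$ replaced by $\gamma$ and $p=2$ (valid since $W\in A_2$). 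Multiplying the two bounds, the factors $W(B_s(y))^{\pm 1/2}$ cancel and I get
$$\int_{|x-y|>t} \left|\nabla_x \Gamma_{s^2}(x,y)\right| W(x)\,dx \le (C\tilde{C})^{1/2}\, s^{-1}\, e^{-\gamma t^2/(2s^2)}\, W(y),$$
which is \eqref{2.4.1} with $\beta=\gamma/2$ and $C=(C\tilde{C})^{1/2}$; since $\gamma$ depends only on $n,\lambda$ and $\tilde C$ depends only on $n$ and $[W]_{A_2}$, the resulting constants are admissible.

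I do not expect any genuine obstacle here, since Proposition \ref{est1} already carries the substantive gradient-with-Gaussian-decay estimate. The only point requiring care is that the weight $W$ need not be bounded, so one cannot simply estimate $\int_{|x-y|>t} W\,dx$; the Gaussian factor must be kept on \emph{both} sides of the Cauchy--Schwarz splitting so that \eqref{ee3.10} can be invoked to absorb $W$ against $W(B_s(y))$, and enough exponential decay (here, a factor $e^{-\gamma t^2/s^2}$ pulled out on the region $|x-y|>t$) is reserved to produce the claimed $e^{-\beta t^2/s^2}$.
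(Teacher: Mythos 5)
Your argument is correct and is essentially the same as the paper's: the paper likewise applies Cauchy--Schwarz after inserting $e^{\gamma|x-y|^2/s^2}\cdot e^{-\gamma|x-y|^2/s^2}$, bounds the first factor by Proposition~\ref{est1}, pulls out $e^{-\gamma t^2/s^2}$ from the region $|x-y|>t$ in the second factor, and then invokes \eqref{ee3.10} to absorb $\int e^{-\gamma|x-y|^2/s^2}W\,dx$ against $W(B_s(y))$. The only cosmetic difference is that the paper extends the first integral to all of $\mathbb{R}^n$ before applying Proposition~\ref{est1}, which changes nothing.
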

\begin{proof}	 
By applying  Lemma \ref{est1} and the Cauchy-Schwartz inequality, one has
		\begin{align*}\label{ee3.1}
	 \int_{|x-y|>t} |\nabla_x \Gamma_{s^2}(x,y)| W(x)\, dx&\leq    \left(\int_{\mathbb{R}^n}|\nabla_x \Gamma_{s^2}(x,y)|^2e^{2\gamma\frac{|x-y|^2}{s^2}}W(x)\, dx\right)^{\frac{1}{2}}               
	   \left(\int_{|x-y|>t} e^{-2\gamma \frac{|x-y|^2}{s^2}} W(x)\, dx\right)^{\frac{1}{2}}\\
       &\leq  C \frac{W(y)}{ sW(B_s(y))^{1/2}}e^{-\frac{\gamma}{2} {t^2}/{s^2}}\left(\int_{\Rn} e^{-\gamma \frac{|x-y|^2}{s^2}} W(x)\, dx\right)^{\frac{1}{2}}\\
       &\leq C s^{-1} e^{-{\gamma t^2}/{(2s^2)} }  W(y),
	\end{align*}
where in the last inequality we have used \eqref{ee3.10}.
	    \end{proof}

 \medskip

%$$$$$$$$$$$$$$$$$$$$$$$$$$$$$$$$$$$$$$$$$$$$$$$$$$$$$$$$$$$$$$$$$$$$$$

\section{Proof of Theorem~\ref{th1.1}}
\setcounter{equation}{0}

The aim of the section is to prove Theorem~\ref{th1.1}. With Propositions \ref{est1} and \ref{prop3.1} at our disposal, our argument  will  follow the approach of \cite{CD}, as treated Riesz transforms associated to Laplace-Beltrami operator on a complete Riemannian manifold satisfying the doubling
volume property and an optimal on-diagonal
heat kernel estimate.
Let us first state the classical Calder\'on-Zygmund decomposition, whose proof can be found in \cite{GR,Stein}.

\begin{lemma}\label{e4.1}
Let $f\in L^1(Wdx)$, and $\alpha>0$. Then there exists a decomposition of $f$,  $f=g+b$, with $b=\sum_j b_j$, and a sequence of balls $B_j:=B(x_j,r_j)$, so that
\begin{align*}
 &(i) \quad |g(x)|\leq C\alpha, \ \  \mbox{ for a.e. }x\in \Rn; \\
 &(ii) \quad   \supp b_j\subset B_j \ \  {\rm and} \ \ \int_{B_j} |b_j(x)| W(x)\,dx\leq C\alpha W(B_j); \\
&(iii)  \quad  \sum_{j} W(B_j) \leq \frac{C}{\alpha} 
  \int_{\mathbb{R}^n} |f(x)| W(x)\,dx; \\ 
&(iv)	 \quad  \sum_{j} \chi_{B_j} \leq C;\\
&(v) \quad \alpha \leq \frac{1}{W(B_j)}\int_{B_j} |f(x)| W(x)\,dx\leq C\alpha. 
\end{align*}

\end{lemma}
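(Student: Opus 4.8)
The plan is to carry out the classical Calder\'on--Zygmund decomposition with respect to the measure $W\,dx$ in place of Lebesgue measure; the sole structural input is that $W\,dx$ is doubling, which holds since $W\in A_2\subset A_\infty$ satisfies \eqref{doubling property}. I would run the standard dyadic stopping-time argument adapted to this measure, so that the ``balls'' $B_j$ in the statement will be realized as the stopping cubes themselves (equivalently, concentric Euclidean balls of comparable radius, which by \eqref{doubling property} carry comparable $W$-measure and change all constants by at most a factor depending on $n$ and $[W]_{A_2}$). One preliminary observation is used repeatedly: an $A_\infty$ weight has infinite total mass, so $W(Q)\to\infty$ as a dyadic cube $Q$ increases to $\Rn$, and therefore, for $f\in L^1(W\,dx)$, the dyadic $W$-averages of $|f|$ tend to $0$ along every increasing chain of cubes.

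First I would fix the standard dyadic grid on $\Rn$ and, for each $x$ with $\sup_{Q\ni x}\frac{1}{W(Q)}\int_Q|f|\,W>\alpha$ (supremum over dyadic cubes $Q\ni x$), select the \emph{maximal} dyadic cube containing $x$ on which this $W$-average exceeds $\alpha$; the preliminary observation guarantees that such a cube exists. This produces a pairwise disjoint family $\{Q_j\}$; set $\Omega:=\bigcup_jQ_j$, take $B_j:=Q_j$, and, writing $\langle f\rangle_{Q_j}:=\frac{1}{W(Q_j)}\int_{Q_j}f\,W$, define
\begin{equation*}
g:=f\,\chi_{\Omega^c}+\sum_j\langle f\rangle_{Q_j}\,\chi_{Q_j},\qquad b_j:=\bigl(f-\langle f\rangle_{Q_j}\bigr)\chi_{Q_j},\qquad b:=\sum_jb_j=f-g.
\end{equation*}

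Next I would check the five properties, the linchpin being (v). If $\widehat Q_j$ denotes the dyadic parent of $Q_j$, maximality gives $\frac{1}{W(\widehat Q_j)}\int_{\widehat Q_j}|f|\,W\le\alpha$, while \eqref{doubling property} gives $W(\widehat Q_j)\le C_0\,W(Q_j)$ with $C_0=C_0(n,[W]_{A_2})$, so that
\begin{equation*}
\alpha<\frac{1}{W(Q_j)}\int_{Q_j}|f|\,W\le\frac{W(\widehat Q_j)}{W(Q_j)}\cdot\frac{1}{W(\widehat Q_j)}\int_{\widehat Q_j}|f|\,W\le C_0\,\alpha,
\end{equation*}
which is (v). Given this, the rest is immediate: disjointness of the $Q_j$ gives $\sum_jW(B_j)=W(\Omega)\le\frac{1}{\alpha}\sum_j\int_{Q_j}|f|\,W\le\frac{1}{\alpha}\int_\Rn|f|\,W$, which is (iii), and $\sum_j\chi_{B_j}=\chi_\Omega\le1$, which is (iv); one has $\supp b_j\subset Q_j=B_j$ and $\int_{B_j}|b_j|\,W\le2\int_{Q_j}|f|\,W\le2C_0\alpha\,W(B_j)$, which is (ii); and for (i), on $\Omega^c$ the Lebesgue differentiation theorem for the doubling measure $W\,dx$ gives $|f(x)|=\lim_{Q\ni x}\frac{1}{W(Q)}\int_Q|f|\,W\le\alpha$ for a.e.\ $x\in\Omega^c$ (if some dyadic cube containing such an $x$ had $W$-average of $|f|$ exceeding $\alpha$, then $x$ would lie in $\Omega$), while on each $Q_j$ one has $|g|=|\langle f\rangle_{Q_j}|\le C_0\alpha$, again by (v). (It is harmless to assume $\int_\Rn|f|\,W>0$; the cancellation $\int_\Rn b_j\,W=0$ also holds by construction, though it is not needed below.)

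I do not expect a real obstacle here: the argument is the textbook one (see, e.g., \cite{GR,Stein}) with $W\,dx$ replacing Lebesgue measure, and every constant depends only on $n$ and $[W]_{A_2}$, through \eqref{doubling property}. The one point that deserves a word of care is the interplay between (iv) and (v): the two-sided average bound (v) \emph{forces} the use of stopping-time cubes --- a Whitney-type covering of $\Omega$ would yield bounded-overlap Euclidean balls but only the upper half of (v) --- which is why I identify the $B_j$ with the disjoint stopping cubes $Q_j$, making (iv) trivial. The only other things to justify are the two classical facts about the doubling measure $W\,dx$ invoked above, namely $W(\Rn)=\infty$ and the dyadic Lebesgue differentiation theorem, both of which are standard for $A_\infty$ weights.
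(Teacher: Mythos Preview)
Your proposal is correct and is precisely the standard argument the paper has in mind: the paper does not give its own proof of this lemma but simply cites \cite{GR,Stein}, and the dyadic stopping-time construction you carry out (with respect to the doubling measure $W\,dx$) is exactly what one finds there. The only cosmetic point is that the paper phrases the $B_j$ as Euclidean balls while you work with dyadic cubes; you already note that passing to concentric balls of comparable radius only affects the constants via \eqref{doubling property}, and in fact property~(v) is never invoked in the subsequent proof of Theorem~\ref{th1.1}, so the harmless weakening of the lower bound in~(v) under this passage is immaterial.
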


\begin{proof}[Proof of Theorem~\ref{th1.1}]  
Denote $T:=\nabla L^{-\frac{1}{2}}$.  It suffices to prove  that  there exists  a constant $C>0$, for any $\alpha>0$, and all $f\in L^1_W(\Rn)$, there holds
\begin{equation}\label{weak11}
	W\left(\big\{x: |Tf(x)|>\alpha\big\}\right)\leq \frac{C}{\alpha} \int_{\mathbb{R}^n}|f(x)| W(x)\, dx. 
\end{equation}
Since $L^1_W(\Rn)\cap L^2_W(\Rn)$ is dense in $L^1_W(\Rn)$, we may suppose $f\in L^1_W(\Rn)\cap L^2_W(\Rn)$ without loss of generality.  

Applying Lemma \ref{e4.1}, one has 
$$
W\left(\{x:| Tf(x)|>\alpha\}\right)\leq W\left(\{x:|Tg(x)|>\alpha/3\}\right)+W\left(\{x:|Tb(x)|>2\alpha/3\}\right)=:\Rmnum{1}+\Rmnum{2}.
$$
 Since   the $L^2_W$ boundedness of $T$ has been proved (Theorem \ref{kato}),  then we use Lemma 4.1 to obtain 
$$
I
\leq C \alpha^{-2}\left\|g\right\|_{L^2_W}^2 \leq C \alpha^{-1}\left\|g\right\|_{L^1_W} 
\leq C \alpha^{-1}\left\|f\right\|_{L^1_W}. 
$$

\medskip

Consider the term $\Rmnum{2}$.  Denote $E^*:=\bigcup_j B_j^*$, where $B_j^*=B(x_j, 3r_j)$.    One can estimate that
 $$
 \begin{aligned}
 \Rmnum{2}&\leq W(E^*)+W\left\{x\in (E^*)^c:\left|\sum_j\left(Te^{-r_j^2L}\right)b_j(x)\right|>\alpha/3\right\}\\
 &\hskip 1.8cm+ W\left\{x\in (E^*)^c: \left|\sum_j D_j b_j(x)\right|>\alpha/3\right\}
 \\
 &=:\Rmnum{2}_1+\Rmnum{2}_2+\Rmnum{2}_3,
\end{aligned}
 $$
 where $D_j:=T(I-e^{-r_j^2L})$.
   By (iii) of Lemma \ref{e4.1} and  the doubling property  \eqref{Doubling}, 
 we have
 $$
 \Rmnum{2}_1\leq C \frac{1}{\alpha} \int_{\mathbb{R}^n}|f(x)|W(x)\, dx.
 $$

 Consider  the term  $\Rmnum{2}_2$. We will show the following inequality:
 \begin{equation} \label{eq4.6}
 \left\|\sum_{j} e^{-r_j^2 L}b_j\right\|_{L^2_W}^2 \leq C \alpha \left\|f\right\|_{L^1_W}.
\end{equation}
If \eqref{eq4.6} has been proved, then we use Chebyshev’s inequality and Theorem \ref{kato} to obtain
$$
II_2\leq C\frac{1}{\alpha^2} \left\|T\left(\sum_{j} e^{-r_j^2 L}b_j\right)\right\|_{L^2_W}^2\leq  \frac{C}{\alpha} \left\|f\right\|_{L^1_W},
$$
which is our desired conclusion.

Let us prove \eqref{eq4.6}. Using (\ref{Gaussian_upper_bound}), we have that for any $x\in (E^*)^c$,
\begin{align*}
  |e^{-r^2_j L}b_j(x)|&\leq \int \Gamma_{r_j^2}(x,y)|b_j(y)|\, dy\\
&\leq C  \int_{B_j}
\min \left\{\frac{1}{W\left(B_{r_j}(x)\right)}, \frac{1}{W\left(B_{r_j}(y)\right)}\right\} e^{-c \frac{|x-y|^2}{r_j^2}} |b_j(y)| W(y) \, dy\\
&\leq Ce^{-\frac{c}{4} \frac{|x-x_j|^2}{r_j^2}}\frac{1}{W(B_{r_j}(x))} \left\|b_j\right\|_{L^1_W},
\end{align*}
where in the last inequality we have used the fact $|x-x_j|/2\leq |x-y|\leq 2|x-x_j| $ when $x\notin B(x_j,3r_j)$ and $y\in B_j$. Next, we  apply (ii) of Lemma \ref{e4.1} to have
\begin{align}\label{ee4.1}
  |e^{-r^2_j L}b_j(x)| &\leq C \alpha e^{-\frac{c}{4} \frac{|x-x_j|^2}{r_j^2}}\frac{W(B_j)}{W(B_{r_j}(x))} 
\leq C \alpha \int_{B_j} \frac{1}{W(B_{r_j}(x))} e^{-\frac{c}{4}\frac{|x-x_j|^2}{r_j^2}}W(y)\,dy \nonumber\\
&\leq C \alpha \int_{B_j} \frac{1}{W(B_{r_j}(x))} e^{-\frac{c}{16}\frac{|x-y|^2}{r_j^2}}W(y)\,dy  \nonumber\\
&\leq C \alpha \int_{B_j} \frac{1}{W(B_{r_j}(y))} \left(\frac{|x-y|}{r_j}\right)^{2n} e^{-\frac{c}{16}\frac{|x-y|^2}{r_j^2}}W(y)\,dy\\
&\leq C \alpha \int_{B_j} \frac{1}{W(B_{r_j}(y))} e^{-\frac{c}{17}\frac{|x-y|^2}{r_j^2}}W(y)\,dy, \nonumber
\end{align}
where in the last inequality above we have used the doubling property of $W$ (see Section 2.1):
$$W(B_{r_j}(y))\leq W(B_{|x-y|+r_j}(x)) \leq [W]_{A_2}   \left(1+\frac{|x-y|}{r_j}\right)^{2n} W(B_{r_j}(x)).$$

\noindent With \eqref{ee4.1} at our disposal,  in order to prove \eqref{eq4.6},  it suffices to show that
\begin{equation}\label{esrm2.1}
  \left\|\sum_j \int_{\mathbb{R}^n}   \frac{1}{W\left(B_{r_j}(y)\right)}
     e^{-\frac{c}{17}\frac{|\cdot-y|^2}{r_j^2}} \chi_{B_j}(y) W(y)\,dy
     \right\|_{L^2_W} \leq C \left\|\sum_j \chi_{B_j}\right\|_{L^2_W}.
\end{equation}
In fact, if \eqref{esrm2.1} has been proved, then we apply (iii) and  (iv) of Lemma \ref{e4.1} to obtain
\begin{align*}
\left\|\sum_{j} e^{-r_j^2 L}b_j\right\|_{L^2_W}^2 
&\leq C\alpha^2\int \left(\sum_j \chi_{B_j}\right)^2 Wdx \leq C \alpha^2\sum_{j} \int_{B_j} W\,dx\\
&=C \alpha^2\sum_{j}W(B_j)\leq C \alpha \left\|f\right\|_{L^1_W}.
\end{align*}

Let us prove (\ref{esrm2.1}).  By duality, it suffices to estimate the term  
$$
{\mathcal I}:=\left|\int_{\mathbb{R}^n} \sum_j \int_{\mathbb{R}^n} \frac{1}{W\left(B_{r_j}(y)\right)}
e^{-\frac{c}{17}\frac{|x-y|^2}{r_j^2}} \chi_{B_j}(y) u(x)W(y)\,dy W(x)\,dx \right|,
$$ 
where  $u$ is any function satisfying $\left\|u\right\|_{L^2_W}=1$.
Note that 
$$
\begin{aligned}
  &\frac{1}{W(B(y,r_j))}\int_{\mathbb{R}^n} e^{-\frac{c}{17}\frac{|x-y|^2}{r_j^2}} |u(x)| W(x)\, dx \\
  =&\frac{1}{W(B(y,r_j))} \left(\int_{B(y,r_j)} 
   +\sum_{k=0}^\infty \int_{B(y,2^{k+1}r_j)\setminus{B(y,2^{k}r_j) }}\right)e^{-\frac{c}{17}\frac{|x-y|^2}{r_j^2}} |u(x)| W(x)\,dx\\
  \leq & M_W u(y)+C\sum_{k=0}^\infty \frac{W(y,2^{k+1}r_j)}{W(y,r_j)}e^{-\frac{c}{17}4^k}M_W u(y)  \leq C  M_W u(y),
\end{aligned}
$$
where $M_W$ is the Hardy--Littlewood maximal operator with respect to $W(x) dx$:
\begin{equation*}
  M_W f(y) :=\sup_{y\in B} \frac{1}{W(B)} \int_{B}|f(x)|W(x)\,dx.
\end{equation*}
Using the Cauchy--Schwartz inequality and the $L^2_W$ boundedness of  $M_W$, we have
$$
{\mathcal I}\leq \int_{\mathbb{R}^n} M_Wu(y) \sum_j \chi_{B_j}(y) W(y)\, dy
\leq \big\|M_Wu\big\|_{L^2_W} \Big\|\sum_j \chi_{B_j}\Big\|_{L^2_W}
\leq C \Big\|\sum_j \chi_{B_j}\Big\|_{L^2_W},
$$ 
which yields \eqref{esrm2.1}.

\bigskip

Finally, it remains to estimate the term $\Rmnum{2}_3$. One can write 
$$
L^{-\frac{1}{2}}=\int_0^\infty e^{-sL} \, \frac{ds}{\sqrt{s}},
$$
then
$$
L^{-\frac{1}{2}}(I-e^{-tL})=\int_0^\infty \left(\frac{1}{\sqrt{s}}-\frac{\chi_{\{s>t\}}(s)}{\sqrt{s-t}}\right) e^{-sL}\,ds,
$$
where $\chi_{\{s>t\}}$ is the characteristic function of $(t,+\infty)$.

Let  $K_t(x,y)$ be the kernel of $T(I-e^{-tL})=\nabla L^{-\frac{1}{2}}(I-e^{-tL})$,
and denote 
\begin{equation}
  g_t(s):=\frac{1}{\sqrt{s}}-\frac{\chi_{\{s>t\}}}{\sqrt{s-t}}.
\end{equation}
Then we have 
$$
K_t(x,y)=\int_0^\infty g_t(s)\nabla_x \Gamma_s (x,y) \, ds.
$$
Thus by Proposition  \ref{prop3.1},
$$
\begin{aligned}
   \int_{|x-y|\geq \sqrt{t}} |K_t(x,y)| W(x)\, dx 
   &\leq C \int_0^\infty |g_{t}(s)|e^{-\beta t/s } \frac{1}{\sqrt{s}} W(y) \, ds \\
   &=C\left(\int_0^{t} \frac{e^{-\beta t/s }}{s} \, ds+
   \int_t^\infty \left(\frac{1}{\sqrt{s-t}}-\frac{1}{\sqrt{s}}\right)e^{-\beta t/s } \,\frac{ds}{\sqrt{s}}  
   \right) W(y)\\
   &=:C\left(\Rmnum{2}_{31}+\Rmnum{2}_{32}\right) W(y).
\end{aligned}
$$

Let $u=s/t$, then
$$
\Rmnum{2}_{31}=\int_0^1 e^{-\beta/u} \frac{1}{u} \,du,
$$
which is a finite number that depends only on $\beta$ (thus, depends on $n$ and $\lambda$).
Next, consider $\Rmnum{2}_{32}$. Letting $u=s-t$, we have 
\begin{align*}
\Rmnum{2}_{32}&\leq\int_0^\infty \left(\frac{1}{\sqrt{u}}-\frac{1}{\sqrt{u+t}} \right) \frac{1}{\sqrt{u+t}} \, du 
=\int_0^\infty \left( \frac{1}{\sqrt{u(u+t)}}-\frac{1}{u+t} \right)\, du\\
&=\int_0^\infty  \left(\frac{1}{\sqrt{vt(vt+t)}}-\frac{1}{vt+t} \right)t\, dv
=\int_0^\infty \left(\frac{1}{\sqrt{v(v+1)}}-\frac{1}{v+1} \right)\, dv <\infty,
\end{align*}
where in the second equality above we used the change of variables $u=vt$. 

Thus,
\begin{align}\label{Key}
\int_{|x-y|\geq \sqrt{t}} |K_t(x,y)| W(x)\, dx \leq CW(y).
\end{align}
Then 
\begin{equation}\label{ee4.2}
\begin{aligned}
\int_{(3B_j)^c}|T(I-e^{-r_j^2L})b_j|W(x)\, dx
&\leq \int_{(3B_j)^c}\left|\int_{B_j}K_{r_j^2}(x,y)b_j(y) \,dy\right|W(x)\, dx \\
&\leq \int_{\mathbb{R}^n} \int_{|x-y|>r_j} |K_{r_j^2}(x,y)|W(x)\, dx|b_j(y)|\, dy \\
&\leq C \int_{\mathbb{R}^n} |b_j(y)|W(y)\, dy.
\end{aligned}
\end{equation}
Thus, we use \eqref{ee4.2} and Chebyshev’s inequality to obtain
\begin{align*}
\Rmnum{2}_3&\leq \frac{C}{\alpha}\sum_j\int_{(3B_j)^c}|T(I-e^{-r_j^2L})b_j|W(x)\, dx\\
&\leq \frac{C}{\alpha}\sum_j\left\|b_j\right\|_{L^1_W}\leq C\sum_j W(B_j)\leq \frac{C}{\alpha}\|f\|_{L^1_W}.
\end{align*}

This completes the proof of (\ref{weak11}).
\end{proof}

\medskip

%$$$$$$$$$$$$$$$$$$$$$$$$$$$$$$$$$$$$$$$$$$$$$$$$$$$$$$$$$$$$$$$$$$$$$$
\section{Hardy space boundedness of Riesz transforms}

Let $L=-\sum_{i,j=1}^n a_{ij} D_i D_j$. The goal of  this section is to investigate the boundedness of Riesz transforms $\nabla L^{-1/2}$ on Hardy spaces associated to $L$.

As mentioned in Introduction,   $L$ is  a linear operator of type $\omega$ on $L^2_W(\Rn)$ with $\omega<\pi/2$, hence $L$ generates a holomorphic semigroup $e^{-zL}$, $0\leq |Arg(z)|<\pi/2-\omega$.   Also the following properties hold:

 {\bf (H1)}  $L$ has a bounded $H_\infty$-calculus on $L^2(Wdx)$ (\cite[Remark 1.13]{EHH}). That is, there exists $c_{\mu,2}>0$ such that  for $b\in H_\infty(S_\mu^0)$ and $f\in L^2_W(\Rn)$,
$$
\|b(L)f\|_{L^2_W}\leq c_{\mu,2}\|b\|_\infty\|f\|_{L^2_W}.
$$

 {\bf(H2)} The  semigroup
$\{e^{-tL}\}_{t>0}$ satisfies the Davies-Gaffney condition (\cite[Lemma 2.14]{EHH}).
That is, there exist constants $C$, $c>0$ such that for any open subsets
$U_1,\,U_2\subset \Rn$,
\begin{equation*}
\int_{U_2}\left| e^{-t^2L}f(x) \right|^2W(x)\,dx\leq C e^{-c\frac{{\rm dist}(U_1,U_2)^2}{t^2}} \int_{U_1}\left| f(x)\right|^2W(x)\,dx, \qquad \mbox{ if } \supp f\subset U_1
\end{equation*}

\noindent 
where ${\rm dist}(U_1,U_2):=\inf_{x\in U_1, y\in U_2} d(x,y)$.

For details of the theory of functional calculus, We refer the reader to see \cite{K,Mc,AlDM,Auscher}. 
 \medskip

Next, let us recall  the definitions of  Hardy spaces associated to  $L$ (see \cite{ADM,AMR,HLMMY,DLi}).

Define 
\begin{equation*}
S_{L}f(x):=\left(\iint_{\Gamma(x)}
|t^2Le^{-t^2L} f(y)|^2 {W(y)\over W(B_t(x))}\,{dydt\over t}\right)^{1/2},
\quad x\in \Rn,
\end{equation*}
where $\Gamma(x):=\{(y,t)\in \Rn\times (0,\infty): |x-y|<t\}$.
 Define 
$\mathcal{H}^2_{L,W}(\Rn):= \overline{\mathcal{R}(L)}$,
that is, the closure of the range of $L$ in $L^2_W(\Rn)$.   Then $L^2_W(\Rn)$
is the orthogonal sum of $\mathcal{H}^2_{W}(\Rn)$ and the null space $\mathcal{N}(\tilde{L})$, where $\tilde{L}$ is   adjoint operator of $L$ in $L^2_W(\Rn)$, as defined in Introduction. 

For $1\leq p<\infty$, define
the Hardy space  ${\mathcal H}^p_{L,W}$ associated to $L$  as the 
completion of
$\{f\in \mathcal{H}_{L,W}^2(\Rn):\,\| S_Lf\|_{L^p_W(\Rn)}<\infty\}$,  with respect to the norm $\|S_Lf\|_{L^p_W(\Rn)}$,  and
\begin{equation*}
\|f\|_{{\mathcal H}^p_{L,W}}:=\| S_Lf\|_{L^p_W(\Rn)},\quad f\in \mathcal{H}^2_{L,W}(\Rn).
\end{equation*}
We note that by Theorem \ref{kato}, it follows that 
$\mathcal{N}(\sqrt{\tilde{L}})=\{0\}$, and thus  $\mathcal{N}({\tilde{L}})=\{0\}$. 
As a result, $\mathcal{H}^2_{L,W}(\Rn)=L^2_W(\Rn)$. By a similar argument of \cite{ADM}, we have 
$\mathcal{H}^p_{L,W}(\Rn)=L^p_W(\Rn)$, for $1<p<\infty$.

Now we give the proof of Theorem \ref{th1.2}.
\smallskip

\noindent {\it Proof of Theorem \ref{th1.2}.}
It is a direct corollary of  \cite[Theorem 4.9]{DLi}, in which  a similar argument to \cite{HM} was used. In fact, by Theorem \ref{kato}, $\nabla L^{-\frac{1}{2}}$ is bounded on $L^2_W(\Rn)$. It  is also known from \cite[Lemma 2.14]{EHH} that  the family of operators $\sqrt{t}\nabla e^{-tL}$ satisfies the Davies--Gaffney estimates. The conditions of  \cite[Theorem 4.9]{DLi} are satisfied,  thus  we finishs the proof of   Theorem \ref{th1.2}.

\begin{remark}
 
(i) Theorem \ref{th1.2}, together with the interpolation theorem of \cite[Theorem 4.7]{DLi},  implies that  $\|\nabla L^{-\frac{1}{2}}f\|_{\mathcal{H}_{L,W}^p(\Rn)}\leq C_p \|f\|_{\mathcal{H}_{L,W}^p(\Rn)}$,  for all $1<p<2$.    Since $\mathcal{H}_{L,W}^p(\Rn)=L^p_W(\Rn), (1<p<2)$,  we show  $L^p_W(\Rn)$ boundedness of the Riesz transform $\nabla L^{-1/2}$  for $1<p<2$ again.

(ii) Up to now, we do not know whether $\|\nabla L^{-\frac{1}{2}}\|_{L_W^p(\Rn)}\leq C_p \|f\|_{L_W^p(\Rn)}$ hold for some $p>2$.  It is a  very interesting question. 
\end{remark}

 \bigskip

\noindent
{\bf Acknowledgements}: The authors would like to thank Prof. Lixin Yan for helpful discussion and useful suggestions. The authors   were supported  by National Key R$\&$D Program of China 2022YFA1005700.
  L. Song was  supported by  NNSF of China (No. 12471097).
\bigskip


\begin{thebibliography}{999}

\bibitem{AlDM} D. Albrecht, X.T. Duong, and A. McIntosh,  Operator theory and harmonic analysis. Instructional Workshop on Analysis and Geometry, Part III (Canberra, 1995), 77--136, Proc. Centre Math. Appl. Austral. Nat. Univ., {\bf 34}, Austral. Nat. Univ., Canberra, 1996.

\bibitem{Auscher} P. Auscher,  On necessary and sufficient conditions for $ L^ p $-estimates of Riesz transforms associated to elliptic operators on $\mathbb {R}^ n $ and related estimates. {\it Mem. Amer. Math. Soc.}, {\bf 186} (2007), no. 871, xviii+75 pp.
\bibitem{ACDH} P. Auscher, T. Coulhon, X.T. Duong, and S. Hofmann,   Riesz transform on manifolds and heat kernel regularity. {\it Ann. Sci. \'Ecole Norm. Sup.},  {\bf 37}  (2004),  no. 6, 911--957.
\bibitem{ADM} P. Auscher,  X.T. Duong, and A. McIntosh, Boundedness of Banach space valued singular integral operators and Hardy spaces. Unpublished preprint  (2005).
\bibitem{AMR}
P. Auscher, A. McIntosh, and E. Russ, Hardy spaces of differential forms on Riemannian manifolds. {\it J. Geom. Anal.},  {\bf 18} (2008), no. 1, 192--248.
\bibitem{AHLMcT} P. Auscher, S. Hofmann, M. Lacey, A. McIntosh, and Ph. Tchamitchian, The Solution of the Kato Square Root Problem for Second Order Elliptic Operators on ${\mathbb R}^n$.  {\it Ann. of Math.}, {\bf 156} (2002), no. 2, 633--654. 
\bibitem{AT} P. Auscher,  and  Ph. Tchamitchian,  Square root problem for divergence operators and related topics, {\it Ast\'{e}risque}, {\bf 249} (1998), viii+172.
\bibitem{CD} T. Coulhon, and X.T. Duong,   Riesz transforms for $1 \leq p \leq 2$. {\it Trans. Amer. Math. Soc.}, {\bf  351} (1999), 1151--1169.
\bibitem{DLi} X.T. Duong,  and J. Li,  Hardy spaces associated to operators satisfying Davies-Gaffney estimates and bounded holomorphic functional calculus. {\it J. Funct. Anal.}, {\bf 264} (2013), no. 6, 1409--1437.

\bibitem{DMc} X.T. Duong, and A. McIntosh, Singular integral operators with non-smooth kernels on irregular domains. {\it Rev. Mat. Iberoamericana}, {\bf 15} (1999), no. 2, 233--265.

\bibitem{DY02} X.T. Duong, and L.X. Yan,  Bounded holomorphic functional calculus for non-divergence form differential operators. {\it Differential Integral Equations}, {\bf 15}  (2002), no. 6, 709--730.


\bibitem{DY05} X.T. Duong, and L.X. Yan,  Duality of Hardy and BMO spaces associated with operators with heat kernel bounds. {\it J. Amer. Math. Soc.}, {\bf 18} (2005), no. 4, 943--973.

\bibitem{E2} L. Escauriaza,  Bounds for the fundamental solutions of elliptic and parabolic equations. {\it Comm. Partial Differential Equations}, {\bf 25} (2000), no. 5-6, 821--845.

\bibitem{EHH} L. Escauriaza, P. Hidalgo-Palencia,  and S. Hofmann, On the Kato Problem for Elliptic Operators in Non-Divergence Form. {\it Vietnam J. Math.}, {\bf 52} (2024), no. 4,  1067--1096.

\bibitem{GR} J. Garc\'ia-Cuerva, and J.L. Rubio de Francia,  Weighted norm inequalities and related topics. North-Holland Mathematics Studies, 116. Notas de Matem\'atica [Mathematical Notes], 104. North-Holland Publishing Co., Amsterdam, 1985. x+604 pp.


\bibitem{Gr} L. Grafakos,  Classical Fourier analysis. Third edition. Graduate Texts in Mathematics, 249. Springer, New York, 2014. xviii+638 pp.

\bibitem{HLMMY} S. Hofmann, G.Z. Lu, D. Mitrea, M. Mitrea, and L.X. Yan,  Hardy spaces associated to non-negative self-adjoint operators satisfying Davies-Gaffney estimates. {\it Mem. Amer. Math. Soc.}, {\bf 214} (2011), no. 1007, vi+78 pp.

\bibitem{HJ} S. Hofmann,  and J.M.  Martell,  $L^p$ bounds for Riesz transforms and square roots associated to second order elliptic operators. {\it Publ. Mat.}, {\bf 47} (2003), no. 2, 497--515.
\bibitem{HM} S. Hofmann,  and S. Mayboroda, Hardy and BMO spaces associated to divergence form elliptic operators. {\it Math. Ann.}, {\bf 344} (2009), 37--116.


\bibitem{Ka} T. Kato,  Fractional powers of dissipative operators. {\it J. Math. Soc. Japan},  {\bf 13} (1961), 246--274. 

\bibitem{K} T. Kato, Perturbation theory for linear operators. Reprint of the 1980 edition. {\it Classics in Mathematics}. Springer-Verlag, Berlin, 1995. xxii+619 pp.


\bibitem{KMe} C. Kenig, and Y. Meyer,  Kato's square roots of accretive operators and Cauchy kernels on Lipschitz curves are the same. Recent progress in Fourier analysis (El Escorial, 1983), 123--143, North-Holland Math. Stud., 111, Notas Mat., 101, North-Holland, Amsterdam, 1985.


\bibitem{Mc82} A. McIntosh,  On representing closed accretive sesquilinear forms as $(A^{1/2}u,{A^{\ast}}^{1/2}v)$. Nonlinear  partial differential equations and their applications. Collège de France Seminar, Vol. III (Paris, 1980/1981), pp. 252--267, Res. Notes in Math., {\bf 70}, Pitman, Boston, Mass., 1982.

\bibitem{Mc84} A. McIntosh, Alan Square roots of operators and applications to hyperbolic PDEs. Miniconference on operator theory and partial differential equations (Canberra, 1983), 124--136, Proc. Centre Math. Anal. Austral. Nat. Univ., {\bf 5}, Austral. Nat. Univ., Canberra, 1984.

\bibitem{Mc} A. McIntosh, Operators which have an $H^\infty$ functional calculus. Miniconference on operator theory and partial differential equations (North Ryde, 1986), 210--231, Proc. Centre Math. Anal. Austral. Nat. Univ., {\bf 14}, Austral. Nat. Univ., Canberra, 1986.





\bibitem{Shen} Z.W. Shen,  Bounds of Riesz Transforms on $L^p$ Spaces for Second Order Elliptic Operators.  {\it Ann. Inst. Fourier (Grenoble)},  {\bf 55} (2005), no. 1, 173--197.


\bibitem{SY} L. Song,and L.X. Yan,  A maximal function characterization for Hardy spaces associated to nonnegative self-adjoint operators satisfying Gaussian estimates. {\it Adv. Math.}, {\bf 287} (2016), 463--484.

\bibitem{Stein} E.M. Stein,  Harmonic analysis: real-variable methods, orthogonality, and oscillatory integrals. With the assistance of Timothy S. Murphy. Princeton Mathematical Series, 43. Monographs in Harmonic Analysis, III. Princeton University Press, Princeton, NJ, 1993. xiv+695 pp.










\end{thebibliography}
\end{document}